\numberwithin{equation}{section}
\theoremstyle{plain}
\newtheorem{thm}{Theorem}[section]
\newtheorem{prop}[thm]{Proposition}
\newtheorem{lemma}[thm]{Lemma}
\theoremstyle{definition}
\newtheorem{dfn}[thm]{Definition}
\theoremstyle{remark}
\newtheorem{rem}[thm]{Remark}
\newcommand{\N}{\mathbb N}
\newcommand{\vu}{\text{\rm vu}}
\newcommand{\sign}{\text{\rm sign}}
\begin{document}
\title{Virtual unknotting numbers of certain virtual torus knots}
\author{Masaharu Ishikawa}
\address{Mathematical Institute, Tohoku University, Sendai, 980-8578, Japan}
\email{ishikawa@m.tohoku.ac.jp}
\author{Hirokazu Yanagi}
\address{Mathematical Institute, Tohoku University, Sendai, 980-8578, Japan}
\email{sara.320.410@gmail.com}
\keywords{unknotting number, virtual knot}
\subjclass[2010]{Primary 57M25; Secondary 57M27}

\begin{abstract}
The virtual unknotting number of a virtual knot is the minimal number of crossing changes that makes the virtual knot 
to be the unknot, which is defined only for virtual knots virtually homotopic to the unknot.
We focus on the virtual knot obtained from the standard $(p,q)$-torus knot diagram 
by replacing all crossings on one overstrand
into virtual crossings and prove that its virtual unknotting number is 
equal to the unknotting number of the $(p,q)$-torus knot, i.e. it is $(p-1)(q-1)/2$.
\end{abstract}

\maketitle

\section{Introduction}

The notion of virtual knots was introduced by Kauffman in~\cite{kauffman} as a generalization of classical knots.
A virtual knot diagram is a diagram obtained from a knot diagram by changing some of crossings into, so-called, virtual crossings.
A virtual crossing is depicted by a small circle around the crossing point, for example see Figure~\ref{fig1} below.
Two virtual knot diagrams are said to be equivalent if they are related by 
isotopy of diagrams and generalized Reidemeister moves shown in Figure~\ref{figRM}.
An equivalence class, or a diagram in this equivalent class, is called a {\it virtual knot}.
We can also define a virtual knot as an embedding of a circle into a thickened surface.
If a virtual knot can be modified into the unknot by generalized Reidemeiter moves and crossing changes
then we say that the virtual knot is {\it virtually null-homotopic}.
Here a {\it crossing change} is an operation which replaces an overstand at a classical crossing into an understrand. 
Remark that there exist virtual knots which cannot be modified into the unknot by only 
crossing changes.

There are several unknotting operations for virtual knots. For example, 
an operation which replaces a classical crossing of a diagram of a virtual knot by a virtual crossing
is called a {\it virtualization}\footnote{The terminology ``virtualization'' is used in~\cite{kauffman2} for a different operation.}, and any virtual knot can be modified into the unknot by applying virtualizations
successively. A forbidden move, introduced by Goussarov, Polyak and Viro~\cite{gpv}, 
is also known as an unknotting operation~\cite{kanenobu, nelson}.
A forbidden detour move, which is essentially introduced in~\cite{kanenobu, nelson},
is also, see~\cite{yoshiike}.

In~\cite{chernov}, Byberi and Chernov introduced the notion of the virtual unknotting number for 
virtually null-homotopic virtual knots~\cite{chernov}.
The {\it virtual unknotting number} $\vu(K)$ of a virtually null-homotopic virtual knot $K$
is the minimal number of crossing changes needed to modify $K$ into the unknot.
They studied virtual unknotting numbers for virtual knots with virtual bridge number one.

Recently, in~\cite{kkkp}, Kaur, Kamada, Kawauchi and Prabhakar introduced 
the notion of the generalized unknotting number of a virtual knot as follows:
For any virtual knot diagram, there exists a sequence of virtualizations and crossing changes such that
the virtual knot is modified into a virtual knot diagram virtually homotopic to the unknot.
For such a sequence $s$ for a virtual knot $K$, let $v_s$ and $c_s$ denote
the number of virtualizations and crossing changes in the sequence, respectively.
Then the generalized unknotting number $U(K)$ of a virtual knot $K$ is defined
by the minimal pair $(v_s,c_s)$ among such sequences and choices of diagrams of $K$, where
the minimality is defined by the lexicographic order.
For example, $\min\{(2,0),(0,1)\}=(0,1)$.
In~\cite{kkkp}, they determined the generalized unknotting numbers of virtual knots obtained from 
the standard $(2,p)$-torus knot diagrams by changing odd number of classical crossings into virtual crossings,
and also those of virtual knots obtained from the standard twisted knot diagrams by
changing some specific crossings into virtual crossings. 

In this paper, we study the virtual unknotting numbers for a certain class of
virtual knots obtained from the standard torus knot diagrams by applying virtualizations.
Let $D(T_{p,q})$ be the standard $(p,q)$-torus knot diagram, where $p$ is the braid index and $p,q>0$.
Here we do not care in which side we close the braid since we think of it as a diagram on $S^2$. 
The diagram has $q$ overstrands and we label them by $a_1,a_2,\ldots,a_q$ in a canonical order.
Fix an integer $n\in\{1,\ldots,q\}$ and virtualize all classical crossings
at which $a_i$ passes as an overstrand for $i=1,\ldots,n$. 
We denote the obtained virtual knot by $VT_{p,q}^n$.
Note that the supporting genus of $VT_{p,q}^n$ is at most one and this implies that
$VT_{p,q}^n$ is virtually null-homotopic, see Remark~\ref{rem99}.

\begin{figure}[htbp]
\begin{center}
  \includegraphics[width=6cm]{./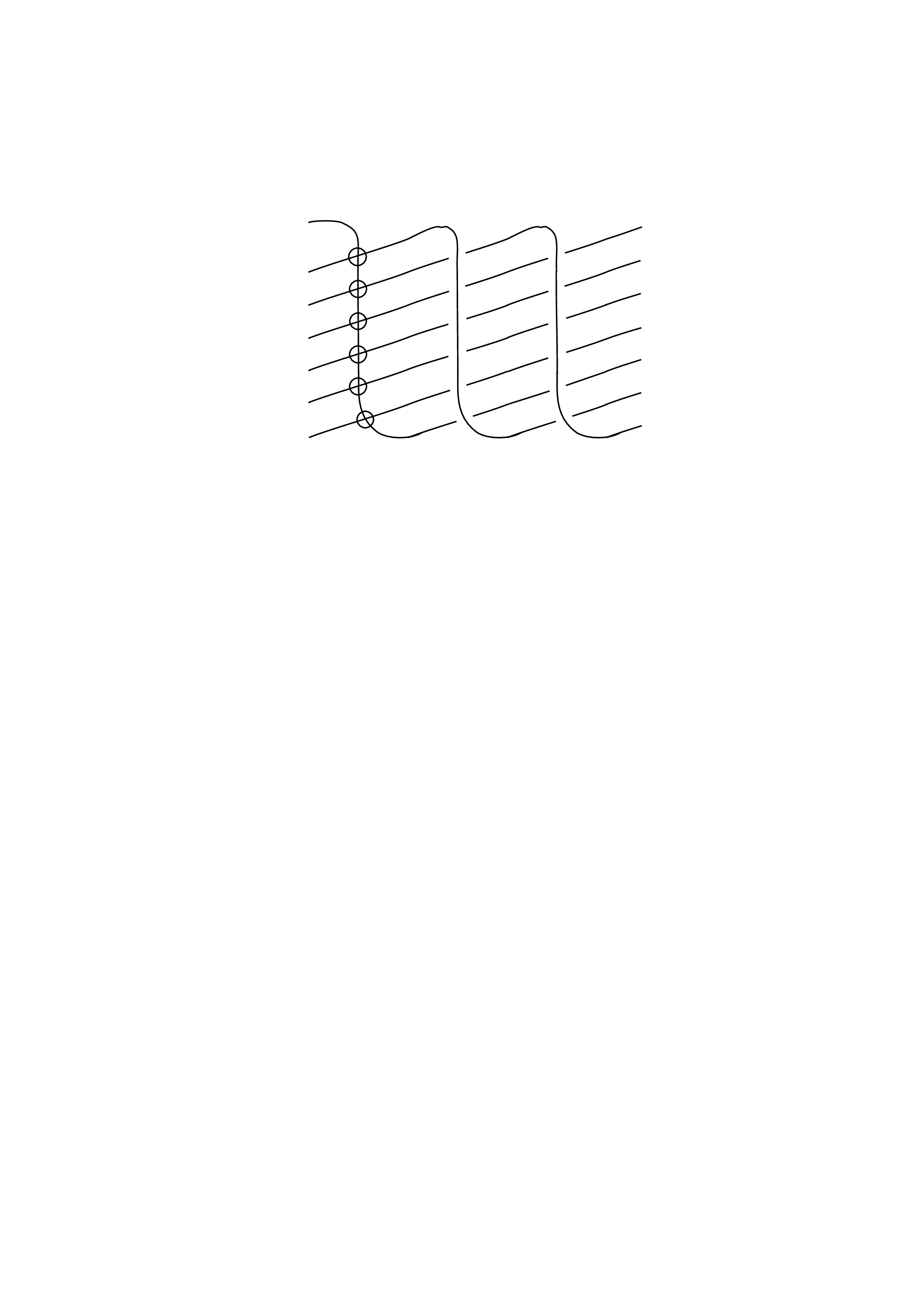}
  \caption{The closure of this braid is $VT_{7,3}^1$.\label{fig1}}
\end{center}
\end{figure}

The following is our main theorem.

\begin{thm}\label{thm1}
The virtual unknotting number of the virtual knot $VT_{p,q}^1$ is 
\[
   \vu(VT_{p,q}^1)=\frac{(p-1)(q-1)}{2}.
\]
\end{thm}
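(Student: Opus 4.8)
The plan is to prove the two inequalities $\vu(VT_{p,q}^1)\le (p-1)(q-1)/2$ and $\vu(VT_{p,q}^1)\ge (p-1)(q-1)/2$ separately. For the upper bound I would use that in $VT_{p,q}^1$ the overstrand $a_1$ carries only virtual crossings, so it is a ``free'' arc that may be rerouted at will by detour moves. Realizing $VT_{p,q}^1$ as the closure of the virtual braid $v_1\cdots v_{p-1}(\sigma_1\cdots\sigma_{p-1})^{q-1}$, every crossing change of $VT_{p,q}^1$ must be performed at one of the $(p-1)(q-1)$ classical crossings, that is, away from $a_1$. It therefore suffices to exhibit in the standard diagram $D(T_{p,q})$ an unknotting set of size $(p-1)(q-1)/2$ disjoint from the $p-1$ crossings of $a_1$: performing those crossing changes on $VT_{p,q}^1$ produces the unknot diagram obtained by performing the same changes on $D(T_{p,q})$, with the crossings of $a_1$ additionally virtualized, after which a detour move pulls $a_1$ off and leaves the unknot. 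That $T_{p,q}$ admits such an unknotting set should follow from the standard explicit proof of $u(T_{p,q})\le (p-1)(q-1)/2$, whose crossing changes fall in a ``triangular'' pattern that, by the cyclic symmetry of $(\sigma_1\cdots\sigma_{p-1})^q$, can be rotated so as to leave one prescribed block untouched.

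For the lower bound I would search for a virtual-knot invariant $I$ that is unchanged by generalized Reidemeister moves, changes by at most a fixed constant under a single crossing change, and whose value on $VT_{p,q}^1$ forces at least $(p-1)(q-1)/2$ crossing changes to reach the unknot. The obvious first candidates --- the writhe (affine index) polynomial of Kauffman and numerical quantities extracted from it, or a signature-type invariant built from a Seifert surface of $VT_{p,q}^1$ inside its supporting thickened torus --- are delicate here, because the classical estimate $u\ge \frac{1}{2}|\sigma|$ is \emph{not} sharp for torus knots, so no naive signature argument can reach $(p-1)(q-1)/2$. Since the answer coincides exactly with the Milnor number $(p-1)(q-1)/2$, the bound should ultimately rest on the classical theorem $u(T_{p,q})=(p-1)(q-1)/2$ of Kronheimer--Mrowka. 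My primary attempt would accordingly be to show that any sequence of $N$ crossing changes and generalized Reidemeister moves carrying $VT_{p,q}^1$ to the unknot can be converted into an unknotting sequence of the same length for the genuine torus knot $T_{p,q}$, by reinstalling $a_1$ as a classical overstrand and checking move by move that each step has a classical counterpart after this replacement.

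The main obstacle is precisely this conversion. At the start $VT_{p,q}^1$ has all of its virtual crossings on $a_1$, but a generalized Reidemeister move performed during the sequence can create virtual crossings among the other strands, so the ``all virtual crossings on $a_1$'' normal form is not preserved and one cannot simply track the arc $a_1$. I expect to circumvent this either by passing to the thickened-surface model --- realizing the whole unknotting process as crossing changes together with stabilizations and destabilizations in $\Sigma\times I$ and extracting from it a classical knot to which the Kronheimer--Mrowka bound applies --- or by isolating a genuinely crossing-change-monotone quantity (a suitable coefficient of the writhe polynomial, or a lattice-valued lower bound in the spirit of the generalized unknotting number of \cite{kkkp}) whose exact value $(p-1)(q-1)/2$ can be read off directly from the defining braid $v_1\cdots v_{p-1}(\sigma_1\cdots\sigma_{p-1})^{q-1}$. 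Showing that such a quantity is genuinely \emph{sharp}, rather than merely a lower bound of signature type, is the crux of the argument.
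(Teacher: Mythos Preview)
Your proposal diverges from the paper on both halves, and the lower bound is where the decisive gap lies.

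\textbf{Upper bound.} The paper does not import a classical unknotting set for $T_{p,q}$. It embeds $VT_{p,q}^1$ in a three-parameter family of virtual closed braids $(i,j,k)$, with $VT_{p,q}^1=(p,q,0)$, and defines three explicit diagram operations A, B, C; each applies under a simple inequality on $(i,j,k)$, lowers $i$ or $j$ by one, and costs $0$ or $i-1$ crossing changes. An induction on the number of operations gives $\vu((i,j,k))\le\frac{1}{2}\bigl((i-1)(j-1)+k\bigr)$. Your route---apply a classical unknotting set disjoint from $a_1$, then detour $a_1$ away---leaves two steps unjustified. First, you have not actually produced an unknotting set of size $(p-1)(q-1)/2$ avoiding the $p-1$ crossings of $a_1$. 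Second, and more seriously, ``a detour move pulls $a_1$ off and leaves the unknot'' presumes that after the crossing changes the complementary arc $R$, closed by a \emph{non-crossing} arc, is the unknot; you only know that $R$ closed by an \emph{over}arc is the unknot. Detour moves let you reroute $a_1$ through virtual crossings, but the endpoints of $a_1$ need not lie on the same complementary region of the immersed arc $R$ in $S^2$, so a crossing-free rerouting need not exist. Virtualizing an overarc is not a generalized Reidemeister move and does not preserve virtual knot type in general.

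\textbf{Lower bound.} Your instinct to reach for Kronheimer--Mrowka points in the wrong direction, and the obstacle you yourself flag (virtual crossings migrating off $a_1$ under generalized Reidemeister moves, so one cannot simply ``reinstall'' $a_1$ as a classical overstrand) is genuine and is not dissolved by passing to thickened surfaces. The paper uses no gauge theory. It uses exactly the writhe/affine-index route you list but then set aside: Byberi--Chernov's $P$-invariant satisfies $\vu(K)\ge\frac{1}{2}\sum_m|b_m|$, and the one elementary lemma that makes this \emph{sharp} for $VT_{p,q}^1$ is that $i(c)\ne 0$ for every chord $c$ of its Gauss diagram. The proof is two lines: for the closed positive braid $D(T_{p,q})$ one has $i(c)=0$ for every chord, and virtualizing the crossings on $a_1$ shifts each $i(c)$ by the number $N\ge 1$ of times the corresponding loop $\gamma_1$ meets $a_1$. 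Since all $(p-1)(q-1)$ classical crossings of $VT_{p,q}^1$ are positive and each contributes $+1$ to some $b_m$ with $m\ge 1$, one gets $\sum_m|b_m|=(p-1)(q-1)$ exactly. Your analogy with the classical signature bound $u\ge\frac{1}{2}|\sigma|$ is what misleads you here: the $P$-invariant is not a signature-type concordance quantity but a chord-counting crossing-change bound, and for this particular diagram every chord has nonzero index, so nothing is lost.
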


We will prove this theorem by the same method as in~\cite{chernov}:
the lower bound is given by the $P$-invariant and the upper bound is done by demonstrating an unknotting sequence explicitly.

At the beginning of this study, we checked the lower bounds of the virtual unknotting numbers for
a few virtual knots obtained from the standard torus knot diagrams by applying virtualizations.
We may call such knots {\it virtual torus knots}.
It is well-known that the unknotting number of the $(p,q)$-torus knot is $(p-1)(q-1)/2$, which was conjectured by 
Milnor~\cite{milnor} and proved by Kronheimer and Mrowka~\cite{km}.
Examining a few examples, we found that the virtual unknotting number of $VT_{p,q}^1$ is exactly 
same as the unknotting number of the $(p,q)$-torus knot, which brought us the above theorem.
On the other hand, for the virtual knot $VT_{p,q}^2$, we are not sure if 
the lower bound obtained by the $P$-invariant attains the virtual unknotting number or not.
This will be discussed in Section~\ref{sec51}.
If we choose the virtualization on the standard torus knot diagram randomly, then 
the obtained virtual knot may not be virtually homotopic to the unknot, see Section~\ref{sec52}.

The authors would like to thank Andrei Vesnin for telling us the result of Kaur, Kamada, Kawauchi and Prabhakar.
They also thank Shin Sato for telling them the fact mentioned in Remark~\ref{rem99}
and thank Vladimir Chernov and Seiichi Kamada for helpful comments.
The first author is supported by the Grant-in-Aid for Scientific Research (C), JSPS KAKENHI Grant Number 16K05140.

\section{Preliminaries}

\subsection{Sign at a classical crossing}

For a virtual knot diagram $D$, we assign an orientation to the virtual knot.
We say that a classical crossing of $D$ is {\it positive} if
the understrand passes though the crossing from the right to the left with respect to the overstrand.
Otherwise, it is said to be {\it negative}. The sign $\sign(c)$ at a crossing point $c$ is defined to be $+1$
if it is positive and $-1$ otherwise.
A crossing change changes a positive crossing into a negative one and vice-versa.

\subsection{Generalized Reidemeister moves}

The local moves of virtual knot diagrams described in Figure~\ref{figRM} are called {\it generalized Reidemeister moves}.
The moves R1, R2, R3 are called classical Reidemeister moves. It is known that two classical knot diagrams represent 
the same knot if and only if these diagrams are related by classical Reidemeister moves.
For virtual knots, we have four additional moves VR1, VR2, VR3 and MR. The moves VR1, VR2, VR3 are called
{\it virtual Reidemeister moves} and the move MR is called a {\it mixed Reidemeister move}.
By definition, two virtual knot diagrams represent the same virtual knot if and only if these diagrams are
related by generalized Reidemeister moves.

\begin{figure}[htbp]
\begin{center}
  \includegraphics[width=14cm]{./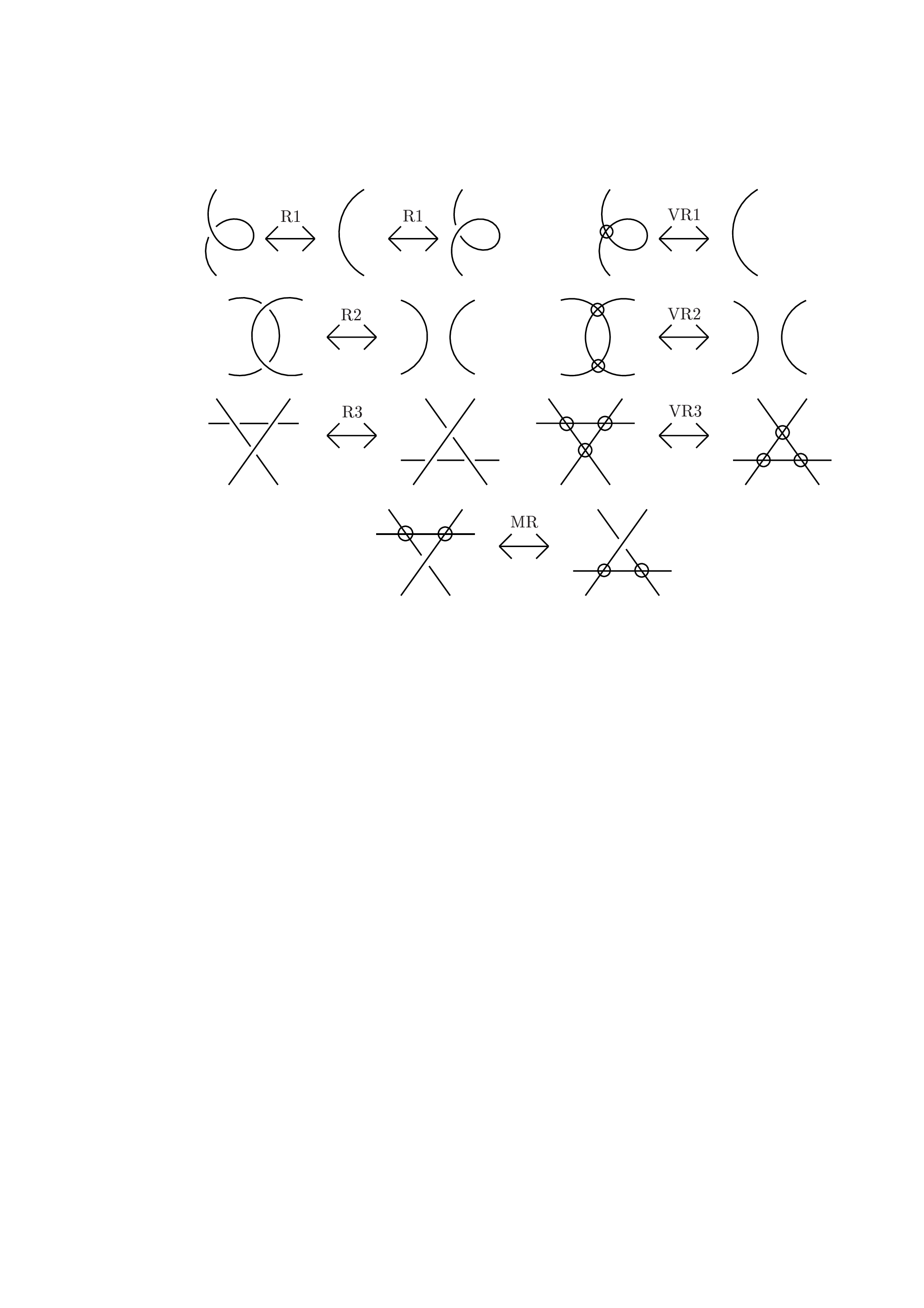}
  \caption{Generalized Reidemeister moves.\label{figRM}}
\end{center}
\end{figure}

Two virtual knots are said to be {\it virtually homotopic} if they are related by isotopy, generalized Reidemeister moves
and crossing changes. If a virtual knot is virtually homotopic to the unknot then we say that it is {\it virtually null-homotopic}.

\section{Upper bound}

\subsection{Virtual knot $(i,j,k)$}

Let $i,j$ and $k$ be integers such that $0<j$ and $0\leq k<i$.
We denote by $VB_{i,j}^1$ the virtual braid diagram obtained from the braid part of the standard diagram of 
the $(i,j)$-torus link by changing all classical crossings on the left-most overstrand into virtual crossings.
For example, the braid in Figure~\ref{fig1} is $VB_{7,3}^1$.
Let $B_k$ be the braid given by $\sigma_k\sigma_{k-1}\cdots\sigma_2\sigma_1$
as shown in Figure~\ref{fig100}, where we read the braid words from the left to the right. 
We denote by $(i,j,k)$ the virtual link diagram obtained as the closure of the product of $VB_{i,j}^1$ and $B_k$.
Hereafter, we always assume that $(i,j,k)$ is a virtual knot, that is, it has only one link component.
\begin{figure}[htbp]
\begin{center}
  \includegraphics[width=8cm]{./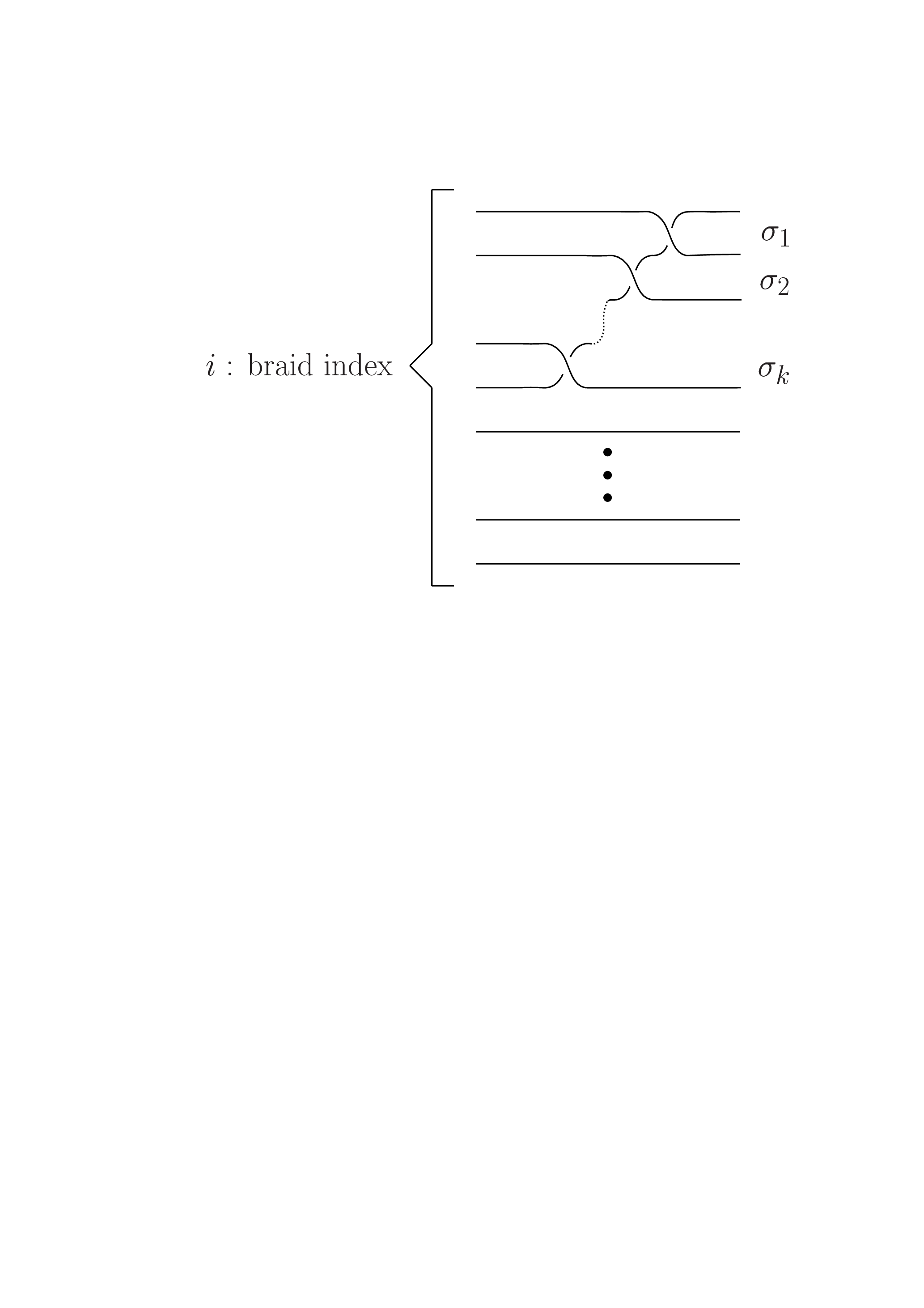}
  \caption{The braid $B_k$.\label{fig100}}
\end{center}
\end{figure}

\begin{rem}\label{rem99}
It is known by Masuda in~\cite[Lemma~5.8]{masuda} that if the supporting genus of a virtual knot is at most one 
then it is virtually null-homotopic. The proof is done by realizing the virtual knot as a knot on a torus
and straightening it in the torus.
If it is homotopic to a straight line in the torus then the corresponding virtual knot is virtually homotopic to the unknot.
If it is homotopic to a straight line with multiplicity $m>1$ then the corresponding virtual knot is virtually homotopic to
the $(m,1)$-torus knot, which is also the unknot. Therefore, the virtual knot is virtually null-homotopic.
We can easily check that the supporting genera of $VT_{p,q}^n$ and $(i,j,k)$ are at most one.
Therefore, they are virtually null-homotopic by his result.
\end{rem}

We will prove the following theorem:

\begin{thm}\label{thm2}
The virtual unknotting number of the virtual knot $(i,j,k)$ is 
\[
   \vu((i,j,k))=\frac{(i-1)(j-1)+k}{2}.
\]
\end{thm}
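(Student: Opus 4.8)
The plan is to prove the two inequalities separately, following the strategy announced for Theorem~\ref{thm1}. For the upper bound, I would exhibit an explicit unknotting sequence for the diagram $(i,j,k)$ consisting of exactly $\bigl((i-1)(j-1)+k\bigr)/2$ crossing changes. The natural source of such a sequence is the classical picture: the closure of $VB_{i,j}^1\cdot B_k$ is, after forgetting virtual crossings, closely related to a torus-type braid, and one knows that $(i-1)(j-1)/2$ crossing changes on the $VB_{i,j}^1$ part suffice to reduce the braid to something whose closure is trivial, with the extra $k/2$ accounting for the crossings in $B_k=\sigma_k\sigma_{k-1}\cdots\sigma_1$. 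Concretely, I would describe which classical crossings to switch, then check that after these switches one can cancel all remaining classical crossings by R2 moves (pulling strands apart through virtual crossings using the mixed move MR), eventually reaching a diagram with no classical crossings that is visibly a trivial knot. The only subtlety is parity: one must argue that $(i-1)(j-1)+k$ is even whenever $(i,j,k)$ is actually a knot, so that the count is an integer; this should follow from the one-component condition imposed in the definition.

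For the lower bound, the plan is to use the $P$-invariant (the invariant used in~\cite{chernov}), which gives a lower bound on $\vu$ in terms of the signed count of certain crossings weighted by indices of the diagram. The key computation is to evaluate the $P$-invariant of $(i,j,k)$. Because all crossings on the left-most overstrand of the torus braid have been virtualized, the remaining classical crossings are organized in a very structured way, and each contributes a term depending on the linking-type index between the two arcs meeting at the crossing. I would set up coordinates on the Gauss diagram of $(i,j,k)$, list the classical crossings with their signs, compute the relevant index for each, and sum. I expect the sum to telescope into exactly $(i-1)(j-1)+k$ before dividing by $2$, matching the upper bound.

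The main obstacle I anticipate is the lower-bound computation: getting the bookkeeping of the $P$-invariant right for the family $(i,j,k)$ — in particular tracking how the indices of the classical crossings change as one varies $i$, $j$, and $k$, and verifying that contributions do not cancel but rather add up with a consistent sign. A secondary obstacle is organizing the explicit unknotting sequence cleanly enough that the reduction to the unknot is manifestly valid (rather than merely plausible from a picture), and handling the interaction of crossing changes with the virtual crossings of $B_k$. Once Theorem~\ref{thm2} is established, Theorem~\ref{thm1} follows by observing that $VT_{p,q}^1$ is the special case $(i,j,k)=(p,q,0)$ (the standard $(p,q)$-torus knot diagram with the first overstrand virtualized is exactly the closure of $VB_{p,q}^1$, i.e. $B_k$ trivial), giving $\vu(VT_{p,q}^1)=(p-1)(q-1)/2$.
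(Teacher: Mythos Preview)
Your high-level strategy matches the paper's: upper bound by an explicit unknotting sequence, lower bound via the $P$-invariant from~\cite{chernov}. The execution, however, differs in both halves, and in each case the paper's route is substantially cleaner than what you sketch.

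For the lower bound, you propose to compute $i(c)$ for every classical crossing and hope the sum telescopes. The paper avoids this entirely. The key observation is that you do not need the exact value of $i(c)$: since every classical crossing of $(i,j,k)$ is positive, it suffices to show $i(c)\ne 0$ for all chords $c$, because then $\sum_m |b_m|=\sum_m b_m$ equals the total number of classical crossings, namely $(i-1)(j-1)+k$. The paper proves $i(c)\ne 0$ by comparison: for the underlying closed positive braid (before virtualization) one checks that $i(c)=0$ for every chord; virtualizing the crossings on the strand $a_1$ then shifts $i(c)$ by the number $N\ge 1$ of times $\gamma_1$ meets $a_1$ at a virtual crossing, so $|i(c)|=N\ne 0$. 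This sidesteps all of the bookkeeping you anticipate as your main obstacle.

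For the upper bound, your plan to split the count as $(i-1)(j-1)/2$ changes on the $VB_{i,j}^1$ part plus $k/2$ changes on $B_k$ is not really a strategy: the two pieces interact nontrivially, and there is no evident way to treat them separately. The paper instead works recursively inside the family $(i,j,k)$. After reducing to $j\le i$ by stripping full twists, it introduces three moves (operations A, B, C) each of which transforms $(i,j,k)$ into another triple $(i',j',k')$ with smaller first or second entry, using respectively $0$, $i-1$, or $i-1$ crossing changes; in every case one verifies $\frac{(i-1)(j-1)+k}{2}-(\text{changes used})=\frac{(i'-1)(j'-1)+k'}{2}$. An induction on the number of operations then gives the bound. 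This recursive decomposition is the real content of the upper bound and is not visible in your outline.

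Your remark that $VT_{p,q}^1=(p,q,0)$ and hence Theorem~\ref{thm1} follows is correct, and your parity observation (that $(i-1)(j-1)+k$ is even when $(i,j,k)$ is a knot) is implicit in the paper but not separately stated.
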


Setting $k=0$, we have Theorem~\ref{thm1}.

The aim of this section is to prove the upper bound of the equality in Theorem~\ref{thm2}, that is, 
\begin{equation}\label{eq1}
  \vu((i,j,k))\leq \frac{(i-1)(j-1)+k}{2}.
\end{equation}

\subsection{Reduce to the case $j\leq i$}

\begin{lemma}\label{lemma1}
Suppose that $j>i$.
\begin{itemize}
\item[(1)] If $(i,j-i,k)$ consists of one link component then $(i,j,k)$ also does.
\item[(2)] $(i,j,k)$ is modified into $(i,j-i,k)$ by $i(i-1)/2$ crossing changes.
\end{itemize}
\end{lemma}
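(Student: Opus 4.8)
The plan is to analyze the virtual braid diagram $VB_{i,j}^1$ when $j>i$ and peel off one full "layer" of the torus braiding. Recall that $VB_{i,j}^1$ is built from the standard $(i,j)$-torus link braid on $i$ strands by virtualizing every classical crossing lying on the left-most overstrand. Writing the underlying torus braid as $(\sigma_{i-1}\sigma_{i-2}\cdots\sigma_1)^j$, the diagram $(i,j,k)$ is the closure of $(\sigma_{i-1}\cdots\sigma_1)^j B_k$ with the crossings of one of the $j$ copies of $\sigma_{i-1}\cdots\sigma_1$ turned virtual. The key structural observation is that a single block $V:=\sigma_{i-1}^{v}\sigma_{i-2}^{v}\cdots\sigma_1^{v}$ of \emph{virtual} crossings is just the permutation braid that cyclically rotates the strands; so on the closure it acts only by relabelling strand endpoints and can be slid around the braid closure freely using VR1--VR3 and MR moves. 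Sliding the virtual layer so that it sits adjacent to one of the \emph{classical} layers $C:=\sigma_{i-1}\cdots\sigma_1$, I can then replace the classical block $C$ by a virtual block $V$ at the cost of exactly the classical crossings in $C$ that get their over/under data flipped: there are $i-1$ crossings in $C$, but turning $C$ into $V$ is not a crossing change — instead I want to keep the virtual layer where it is and argue that the pair (virtual layer adjacent to classical layer $C$) is equivalent, after $i(i-1)/2$ crossing changes distributed among the remaining layers, to a diagram with one fewer classical layer. Concretely: a virtual permutation block followed by a classical block $C$ reduces, via detour moves, to $C$ followed by the virtual block, and one checks that "absorbing" the virtual block through the remaining $(j-2)$ classical blocks and $B_k$ leaves a diagram that, after changing appropriate crossings, is the closure of $(\sigma_{i-1}\cdots\sigma_1)^{j-1}B_k$ with one layer virtualized — i.e. a diagram of $(i,j-1,k)$. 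Iterating the layer-peeling, or rather doing it in one stroke by removing $i$ classical layers at once and tracking the crossing changes, yields $(i,j-i,k)$; the total count of crossing changes works out to $i(i-1)/2$, matching the claim.

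For part (1), the number of link components is a purely combinatorial feature: it equals the number of cycles of the underlying permutation of the virtual braid closure, and virtual/classical crossings contribute the same transposition to that permutation. The permutation of $(i,j,k)$ is $(\text{$i$-cycle})^{j}\cdot\beta_k$ where $\beta_k$ is the permutation of $B_k$, while that of $(i,j-i,k)$ is $(\text{$i$-cycle})^{j-i}\cdot\beta_k=(\text{$i$-cycle})^{j}\cdot\beta_k$ since an $i$-cycle has order $i$. Hence the two permutations are literally equal, so $(i,j,k)$ has one component if and only if $(i,j-i,k)$ does; this is even slightly stronger than what is asked. (If one prefers to avoid the "order $i$" shortcut one can instead show the permutations are conjugate, which also preserves the cycle count.)

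For part (2), I carry out the layer-peeling carefully. First I normalize the diagram so that the virtualized layer is the top layer, using that any of the $j$ torus layers can be taken as the virtualized one up to a cyclic rotation of the braid closure (conjugation in the virtual braid group / moving strands around the closure). Then I isolate the bottom $i$ classical layers $C^i=(\sigma_{i-1}\cdots\sigma_1)^i$, which is a full twist up to braid relations, together with the top virtual layer $V$. The heart of the argument is a local lemma: the closure of $V\cdot\alpha\cdot C^{i}$ (for any braid word $\alpha$ on $i$ strands) is obtained from the closure of $V\cdot\alpha$ — equivalently of $\alpha$ after sliding $V$ off — by $i(i-1)/2$ crossing changes, namely one can change all crossings in one of the $i$ copies of $\sigma_{i-1}\cdots\sigma_1$ inside $C^i$ to make that copy "cancel" (in the virtual sense, turning classical crossings into removable ones after a detour move once the permutation is trivialized), and the $\binom{i}{2}$ count comes from the number of crossings in a single $(\sigma_{i-1}\cdots\sigma_1)$-block being $i-1$ — wait, that is $i-1$, not $\binom i2$, so the right picture is: we must change crossings so that $C^i$ becomes a \emph{pure} virtual braid isotopic to the identity, and a full positive twist $\Delta^2$-type block on $i$ strands has $i(i-1)$ crossings of which exactly half, $i(i-1)/2$, must be reversed to make it unknot/untangle as a virtual object. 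I expect this crossing-count bookkeeping — matching "half the crossings of $i$ torus layers on $i$ strands" with $i(i-1)/2$ and verifying no further changes are needed because the virtual layer $V$ supplies the missing permutation — to be the main obstacle, and the rest (detour moves, component count, cyclic normalization) to be routine. Setting $\alpha=(\sigma_{i-1}\cdots\sigma_1)^{\,j-i-1}B_k$ and wrapping the top virtual layer back on gives exactly $(i,j-i,k)$, completing (2).
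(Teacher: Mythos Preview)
Your part~(1) is correct and is essentially what the paper uses implicitly: the full twist is a pure braid, so removing it leaves the underlying permutation, hence the component count, unchanged. The order-$i$ argument you give is a clean way to say the same thing.

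Your part~(2) lands on the right idea in the end but is tangled up in unnecessary maneuvers and one genuine misconception. The paper's argument is a single line: since $j>i$, the braid word factors as
\[
VB_{i,j}^1\,B_k \;=\; VB_{i,j-i}^1 \cdot \Delta^2 \cdot B_k,
\]
where $\Delta^2=(\sigma_{i-1}\cdots\sigma_1)^i$ is the positive full twist on $i$ strands. This $\Delta^2$ is a purely classical block with $i(i-1)$ positive crossings; changing the $i(i-1)/2$ crossings in one half-twist turns it into $\Delta\cdot\Delta^{-1}$, which is isotopic to $i$ trivial strands. What remains is exactly $VB_{i,j-i}^1\,B_k$, whose closure is $(i,j-i,k)$. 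That is the whole proof.

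All of the normalizing, sliding the virtual block around the closure, and detour-move bookkeeping you propose is redundant: the virtual block already sits at the left of $VB_{i,j}^1$, the full twist already sits among the classical layers, and nothing needs to be moved. More importantly, your sentence ``the virtual layer $V$ supplies the missing permutation'' is wrong and is what drives the detour: the full twist $\Delta^2$ is already a \emph{pure} braid, so there is no missing permutation for $V$ to supply. Once you see that, the ``local lemma'' about $V\cdot\alpha\cdot C^i$ collapses to the observation that $C^i$ alone can be trivialized by $\binom{i}{2}$ crossing changes, with $V$ and $\alpha$ mere spectators.
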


\begin{proof}
The braid part of the virtual knot $(i,j,k)$ is the product of $VB_{i,j-i}^1$, the full twist $B_{i,i}$ of
$i$ strands and the braid $B_k$.
We can replace the full twist $B_{i,i}$ into $i$ horizontal strands by $i(i-1)/2$ crossing changes.
Hence the assertions hold.
\end{proof}

\begin{lemma}\label{lemma2}
If $(i,j-i,k)$ satisfies inequality~\eqref{eq1} then $(i,j,k)$ satisfies it also.
\end{lemma}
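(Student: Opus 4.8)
The plan is to combine Lemma~\ref{lemma1} with an elementary arithmetic identity to push the inequality $\eqref{eq1}$ up from $(i,j-i,k)$ to $(i,j,k)$. First I would invoke Lemma~\ref{lemma1}(1) to guarantee that if $(i,j-i,k)$ is a knot then so is $(i,j,k)$, so that the statement $\vu((i,j,k))$ makes sense; this is needed because our standing assumption is that $(i,j,k)$ has one component. Then, by Lemma~\ref{lemma1}(2), a virtual unknotting sequence for $(i,j-i,k)$ can be prefixed by the $i(i-1)/2$ crossing changes that turn the full twist $B_{i,i}$ into trivial strands, yielding
\[
  \vu((i,j,k)) \leq \vu((i,j-i,k)) + \frac{i(i-1)}{2}.
\]

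The core of the argument is then the arithmetic check that this bound is exactly what $\eqref{eq1}$ predicts. Assuming $(i,j-i,k)$ satisfies $\eqref{eq1}$, i.e.\ $\vu((i,j-i,k)) \leq \big((i-1)(j-i-1)+k\big)/2$, I would simply add $i(i-1)/2$ to both sides and simplify:
\[
  \frac{(i-1)(j-i-1)+k}{2} + \frac{i(i-1)}{2}
  = \frac{(i-1)(j-i-1+i)+k}{2}
  = \frac{(i-1)(j-1)+k}{2}.
\]
Hence $\vu((i,j,k)) \leq \big((i-1)(j-1)+k\big)/2$, which is precisely $\eqref{eq1}$ for $(i,j,k)$.

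There is really no serious obstacle here: the lemma is a bookkeeping step whose only content is the telescoping of the twist-reduction cost against the change in the formula, and both halves of Lemma~\ref{lemma1} do all the geometric work. The one point worth a sentence of care is the parity/integrality of $\vu((i,j-i,k))$ and of the displayed half-integers, but since $(i-1)(j-i-1)$ and $i(i-1)$ have the same parity as $(i-1)(j-1)$ minus $k$ in the relevant cases, the expressions involved are genuine integers whenever $(i,j,k)$ is a knot, and the inequality chain above is valid regardless. Together with Lemma~\ref{lemma1}(1), an induction on $j$ modulo $i$ (with base cases $j\leq i$ handled in the next subsection) will then establish $\eqref{eq1}$ in full generality.
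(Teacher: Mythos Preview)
Your argument is correct and is essentially the same as the paper's: invoke Lemma~\ref{lemma1}(2) to obtain $\vu((i,j,k))\leq \vu((i,j-i,k))+i(i-1)/2$, substitute the assumed bound for $\vu((i,j-i,k))$, and simplify $(i-1)(j-i-1)+i(i-1)=(i-1)(j-1)$. The extra remarks you add about Lemma~\ref{lemma1}(1) and about parity are harmless but unnecessary here; in particular, the inequality chain is valid for real numbers regardless of integrality, so no parity check is needed.
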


\begin{proof}
The inequality~\eqref{eq1} for $(i,j-i,k)$ is $\vu((i,j-i,k))\leq ((i-1)(j-i-1)+k)/2$.
Hence, by Lemma~\ref{lemma1}, we have
\[
\begin{split}
   \vu((i,j,k))&\leq \vu((i,j-i,k))+\frac{i(i-1)}{2} \\
   &\leq \frac{(i-1)(j-i-1)+k}{2}+\frac{i(i-1)}{2}=\frac{(i-1)(j-1)+k}{2}.
\end{split}
\]
\end{proof}

Thus, to prove the upper bound~\eqref{eq1},
it is enough to show the inequality in the case where $j\leq i$.

\subsection{Operations A, B and C}


We will give an unknotting sequence of $(i,j,k)$ explicitly.
To do this, we need to introduce three operations, named A, B and C.
First we introduce operations 1, 2 and 3 to define operations A and B, 
and then give the definitions of operations A, B and C.

\begin{dfn}[Operation 1]
Let $(VB_{i,j}^1)B_k$ be the braid part of $(i,j,k)$. Starting from the right-top point, we follow the strand to the left
until we meet a virtual crossing. 
Assume that this strand is always an understrand at each crossing.
We move this strand to the top of the braid part as shown in Figure~\ref{figM1}.
This operation is called {\it operation 1}. We only use classical Reidemeister moves during this operation.
\end{dfn}

\begin{figure}[htbp]
\begin{center}
  \includegraphics[height=40mm]{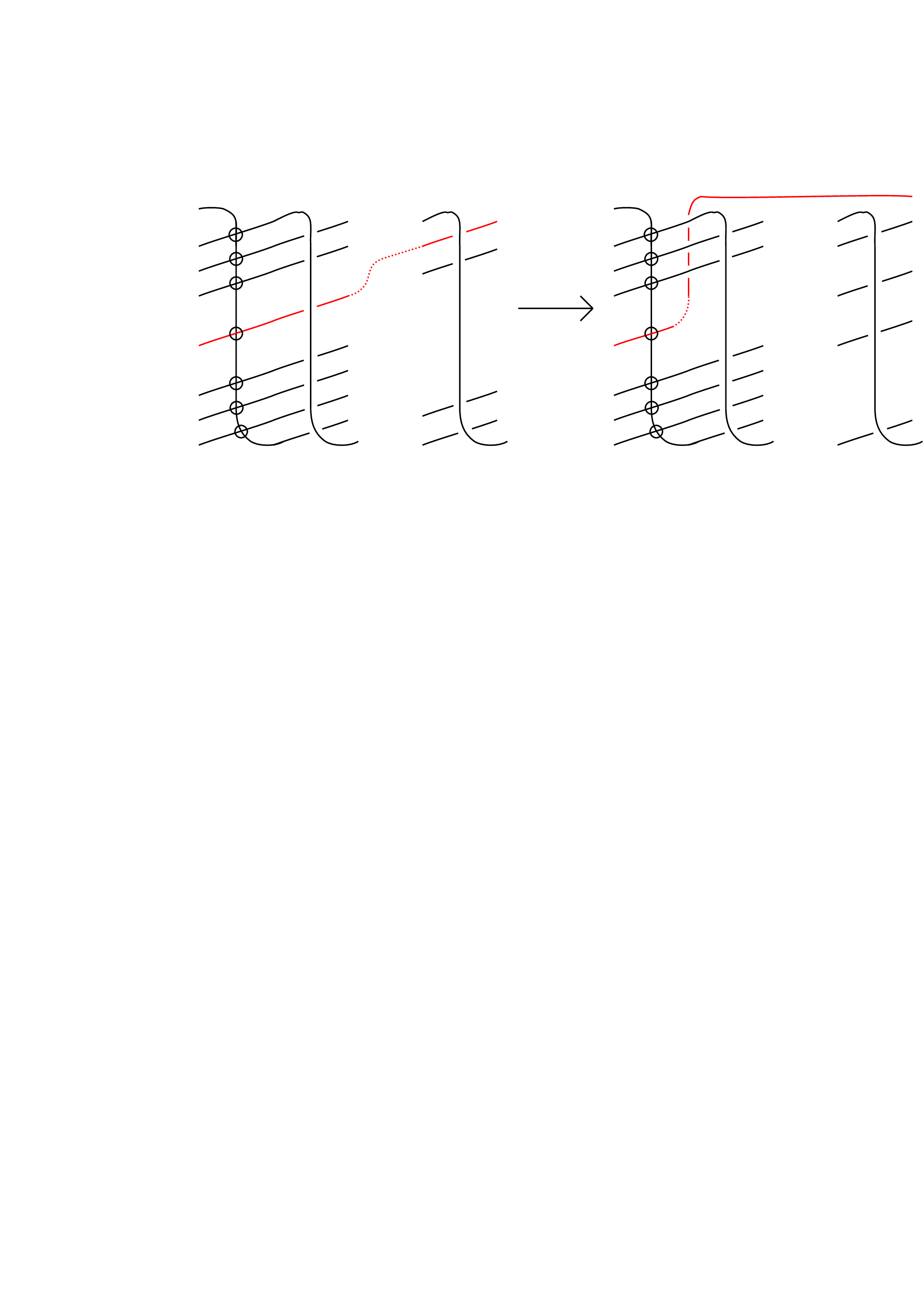}
  \caption{Operation 1.\label{figM1}}
\end{center}
\end{figure}

\begin{dfn}[Operation 2]
Let $(VB_{i,j}^1)B_k$ be the braid part of $(i,j,k)$. Starting from the right-top point, we follow the strand to the left
until we meet a virtual crossing. Assume that this strand contains some of the overstrands of $VB_{i,j}^1$ other
than the most-right overstrand of $VB_{i,j}^1$.
Remark that, since we are assuming $j\leq i$, this strand contains only one overstrand
(passing through $i-1$ classical crossings).
We apply crossing changes to all the crossings on this overstrand and 
then move the strand to the top of the braid part as shown in Figure~\ref{figM2}.
This operation is called {\it operation 2}. 
We use $i-1$ crossing changes during this move.
\end{dfn}

\begin{figure}[htbp]
\begin{center}
  \includegraphics[height=35mm]{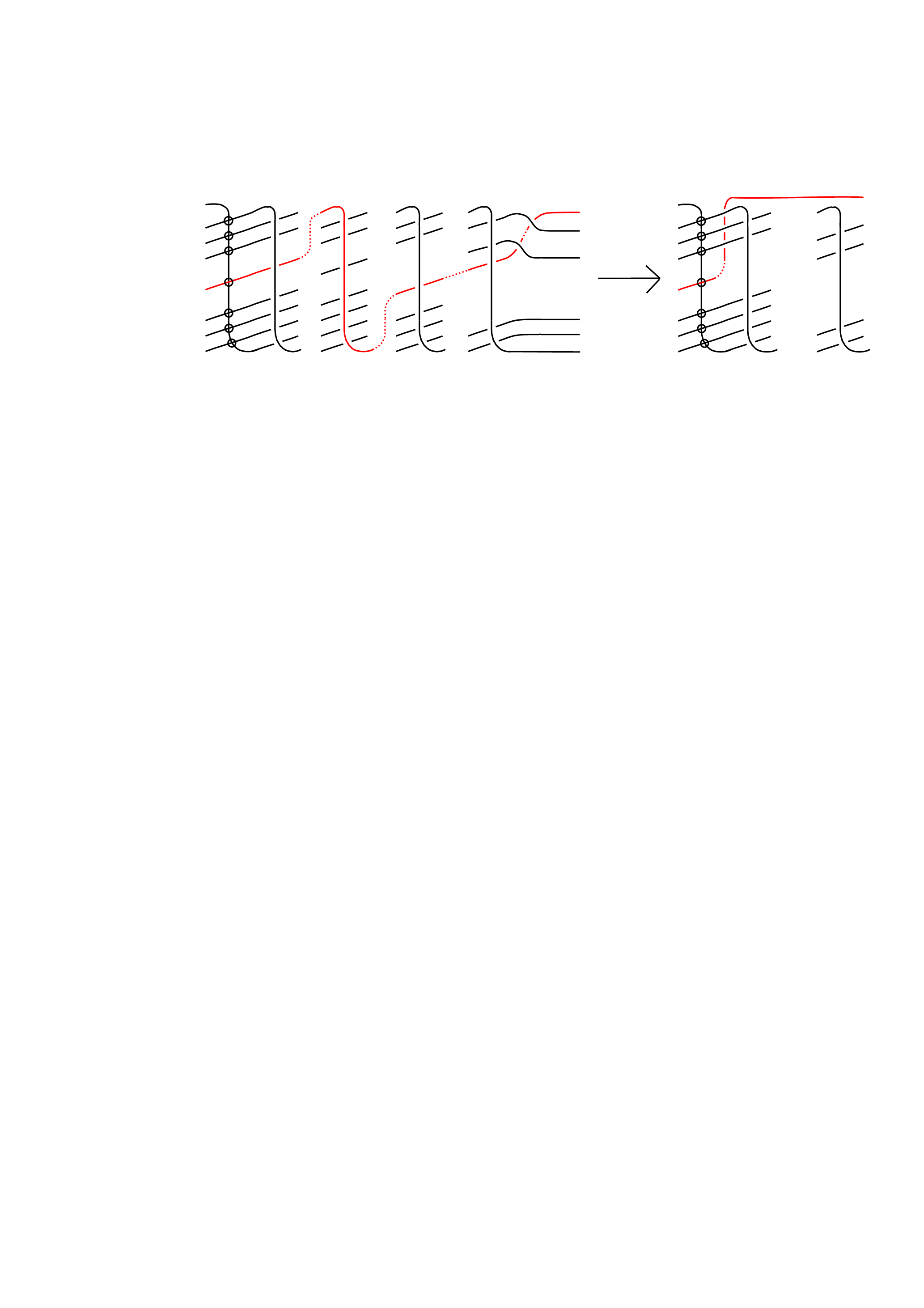}
  \caption{Operation 2. We use crossing changes $i-1$ times.\label{figM2}}
\end{center}
\end{figure}

\begin{dfn}[Operation 3]
Take a diagram obtained by operation 1 or operation 2. We apply mixed Reidemeister moves, 
apply VR1 move once, and then
move all the crossing lying on the left of the virtual crossings to the most-right position by the conjugation of the braid.
We then obtain a virtual knot diagram of the form $(i,j,k)$ again.
This operation is called {\it operation 3}. See Figure~\ref{figM3}. We only use generalized 
Reidemeister moves during this operation.
The braid index of the obtained virtual braid diagram is $i-1$ since we use VR1 move once.
\end{dfn}

\begin{figure}[htbp]
\begin{center}
  \includegraphics[height=8cm]{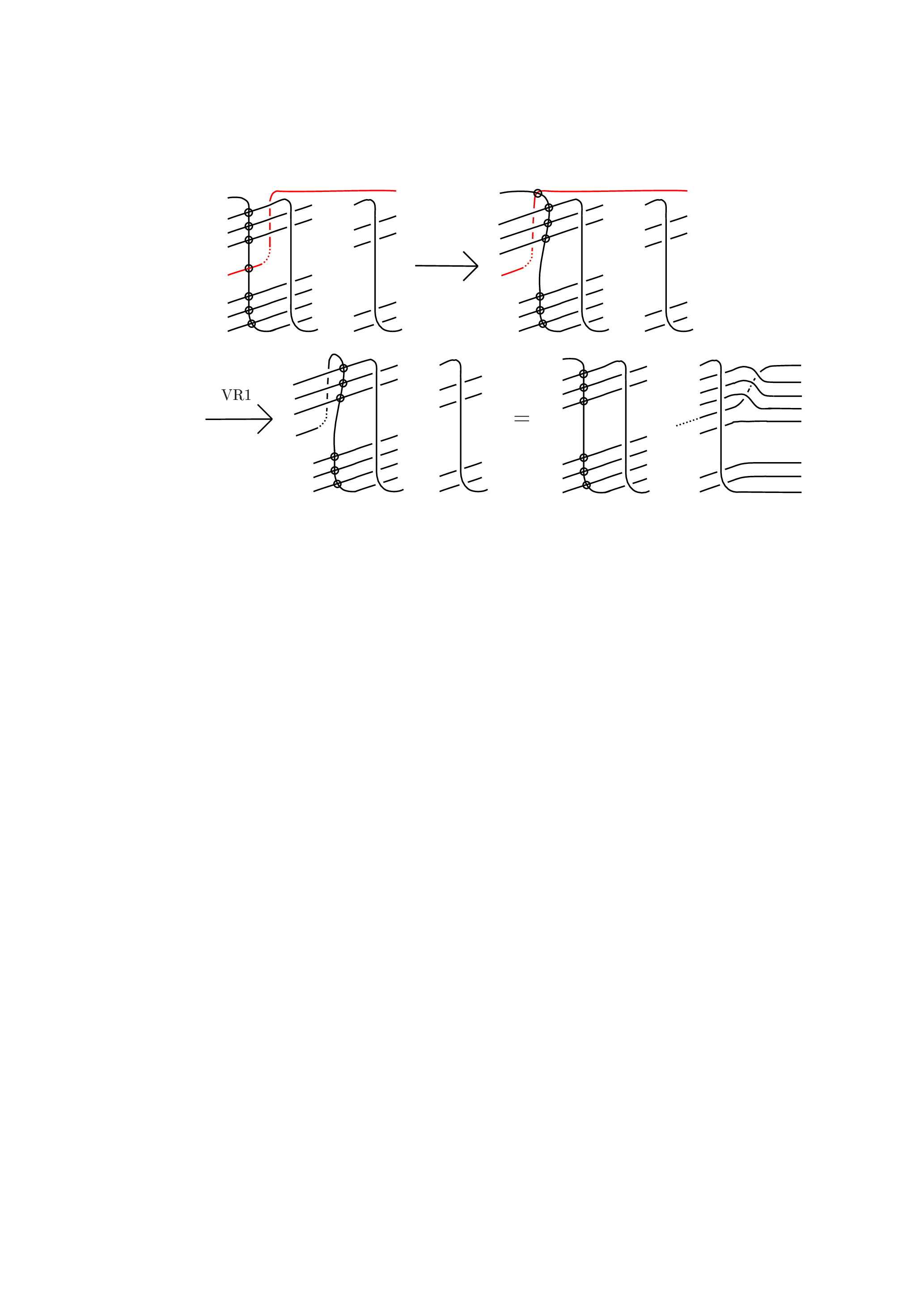}
  \caption{Operation 3. We use VR1 move once, so that the braid index decreases.\label{figM3}}
\end{center}
\end{figure}

\begin{dfn}[Operation A]
Assume that $k+j<i$.
We first apply operation 1 and then operation 3, which modifies $(i,j,k)$ into $(i-1,j,j+k-1)$.
This is called {\it operation~A}. 
Note that we only use generalized Reidemeister moves during this operation.
In other words, we do not use crossing changes during operation A.
\end{dfn}

\begin{dfn}[Operation B]
Assume that $i<k+j<2i-1$ and $i\ne k+1$.
We first apply operation 2 and then operation 3, which modifies $(i,j,k)$ into $(i-1,j-1,j+k-i-1)$.
This is called {\it operation B}. We use $i-1$ crossing changes during this operation.
\end{dfn}

\begin{dfn}[Operation C]
Assume that $i=k+1$ and $j\geq 2$. 
Operation C is the move in Figure~\ref{figMC} which modifies $(i,j,k)$ into $(i,j-1,0)$.
We use $i-1$ crossing changes during this operation.
\end{dfn}

\begin{figure}[htbp]
\begin{center}
  \includegraphics[height=7cm]{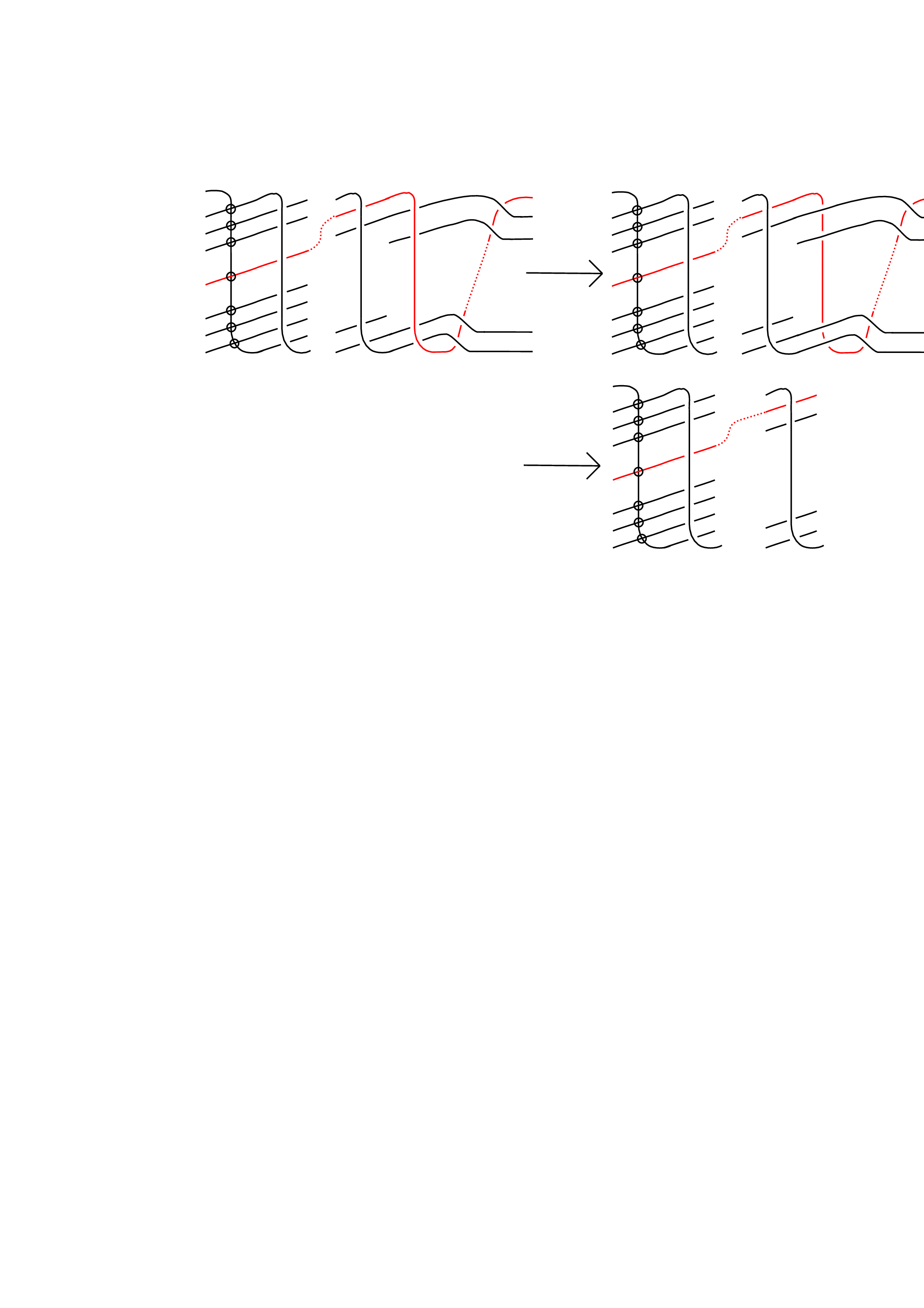}
  \caption{Operation C. We use crossing changes $i-1$ times.\label{figMC}}
\end{center}
\end{figure}

\subsection{An unknotting sequence of $(i,j,k)$}

\begin{lemma}\label{lemma10}
Assume that $(i,j,k)$ is a virtual knot with $2\leq j\leq i$ and $0\leq k<i$.
Then we can apply one of operations A, B and C.
More precisely, 
\begin{itemize}
\item[(i)] if $2\leq k+j<i$ then we apply operation A,
\item[(ii)] if $i<k+j< 2i-1$ and $i\ne k+1$ then we apply operation B, 
\item[(iii)] if $i<k+j< 2i-1$ and $i=k+1$ then we apply operation C, and 
\item[(iv)] if $i=j=k+1$ then we apply operation C.
\end{itemize}

\end{lemma}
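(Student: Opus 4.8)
The plan is to reduce the statement to a short arithmetic case analysis on the integer $k+j$; the only input that is not purely formal is that the borderline value $k+j=i$ cannot occur when $(i,j,k)$ has a single link component.

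First I would record the obvious bounds. Since $j\geq 2$ and $k\geq 0$ we have $k+j\geq 2$, and since $j\leq i$ and $k\leq i-1$ we have $k+j\leq 2i-1$, with equality $k+j=2i-1$ forcing $k=i-1$ and $j=i$, that is, $i=j=k+1$. Granting for the moment that $k+j\neq i$, the value of $k+j$ then lies in exactly one of the ranges $2\leq k+j\leq i-1$, or $i+1\leq k+j\leq 2i-2$, or $k+j=2i-1$. The first is precisely the hypothesis of operation A, so case (i) applies. The last is $i=j=k+1$, so case (iv) applies, and operation C is available since $j=i\geq 2$. In the middle range I would split on whether $i=k+1$: if $i\neq k+1$, the hypothesis of operation B holds verbatim, giving case (ii); if $i=k+1$, then $k=i-1$, whence $j=(k+j)-(i-1)\in\{2,\ldots,i-1\}$, so $j\geq 2$ and operation C applies, giving case (iii). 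In every case the precondition of the chosen operation is met.

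It remains to exclude $k+j=i$, and here I would invoke the hypothesis that $(i,j,k)$ is a knot rather than a link of two or more components. I would compute the underlying permutation of the virtual braid $(VB_{i,j}^1)B_k$ on $i$ strands. A virtual crossing interchanges the two strands passing through it exactly as a classical crossing does, so $VB_{i,j}^1$ and the standard $(i,j)$-torus braid $(\sigma_1\sigma_2\cdots\sigma_{i-1})^j$ induce the same permutation of endpoints, namely the one sending $m$ to $m-j$ reduced modulo $i$; and $B_k=\sigma_k\sigma_{k-1}\cdots\sigma_1$ induces the $(k+1)$-cycle $1\mapsto 2\mapsto\cdots\mapsto k+1\mapsto 1$. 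Hence the strand that starts at the left-most top endpoint is carried by the torus part to the endpoint congruent to $1-j$ modulo $i$; since $1\leq j\leq i$ and $k+j=i$, this endpoint is $1-j+i=k+1$, which $B_k$ then carries back to endpoint $1$. After closing the braid, this strand is a link component all by itself, and since $i\geq j\geq 2$ the remaining $i-1$ strands contribute at least one more component; this contradicts the assumption that $(i,j,k)$ is a knot. Therefore $k+j\neq i$, and the proof is complete.

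I do not expect a genuine obstacle: the argument is bookkeeping on $k+j$ against the hypotheses of operations A, B and C, together with one component count. The one place that needs care is the exclusion of $k+j=i$, where one must identify the correct strand to follow and verify the two elementary permutation computations (and keep track of the paper's braid and orientation conventions); once that is done, the case distinction (i)--(iv) is forced and exhaustive.
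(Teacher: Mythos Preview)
Your proof is correct and follows essentially the same approach as the paper: a case analysis on $k+j$ together with the exclusion of $k+j=i$ by tracing the strand from the top-left endpoint through the braid and showing it closes up to a proper sublink. The only organizational difference is that the paper first separates off the case $i=j$ (arguing there that the knot hypothesis forces $k=i-1$) and then runs the case split for $j\leq i-1$, whereas you handle everything uniformly via the value of $k+j$; your version is marginally cleaner since the paper's extra $i=j$ argument is not actually needed once one has the bound $k+j\leq 2i-1$ and the exclusion $k+j\neq i$.
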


\begin{proof}
If $i=j$ then $k=i-1$, otherwise $(i,j,k)$ has more than one link component.
This is in case (iv) and we can apply operation C.

Suppose that $2\leq j\leq i-1$ and $0\leq k < i$. We decompose it into four cases:
(i), (ii), (iii) in the assertion and the case where $i=k+j$.
If it is in case (i), (ii) and (iii) then we apply operation A, B and C, respectively.
Suppose $i=k+j$. Let $VB_{i,j}^1$ is the braid part of $VT_{i,j}^1$, i.e.,
$VT_{i,j}^1$ is the closure of $VB_{i,j}^1$.
If we follow the strand of $VB_{i,j}^1$ from the left-top then on the right side of the braid $VB_{i,j}^1$,
we arrive at the $j-i+1$-th strand counted from the top. The braid part of $(i,j,k)$ is the product of $VB_{i,j}^1$
and $B_k$, and the $j-i+1$-th strand is connected to the first strand on the right side of $B_k$.
Taking the closure of this braid, we see that this constitutes a link component of $(i,j,k)$.
Since $i\geq 2$, it has more than one link component. Thus we can exclude this case.
\end{proof}

\begin{lemma}\label{prop1}
Suppose that $(i,j,k)$ is a virtual knot with $2\leq j\leq i$ and $0\leq k<i$.
Then $(i,j,k)$ can be modified to the unknot by a sequence of operations A, B and C.
\end{lemma}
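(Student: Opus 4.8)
The plan is to argue by induction on $i+j$, with Lemma~\ref{lemma10} supplying the inductive step. Call a triple \emph{admissible} if $(i,j,k)$ is a virtual knot with $2\le j\le i$ and $0\le k<i$. Since $2\le j\le i$, the smallest value of $i+j$ is $4$, attained only by $(i,j)=(2,2)$; there the knot hypothesis forces $k=1$ (otherwise $(2,2,k)$ is not connected), so $(2,2,1)$ lies in case~(iv) of Lemma~\ref{lemma10}, and operation~C converts it into $(2,1,0)$, the closure of a purely virtual braid, which is the unknot. This is the base case.

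For the inductive step, take an admissible $(i,j,k)$ and assume the assertion for all admissible triples of strictly smaller $i+j$. By Lemma~\ref{lemma10} one of operations A, B, C applies, carrying $(i,j,k)$ to $(i-1,j,j+k-1)$, to $(i-1,j-1,j+k-i-1)$, or to $(i,j-1,0)$ respectively, so in each case $i+j$ drops (by $1$ or $2$). Each operation is realized by generalized Reidemeister moves together with (for B and C) finitely many crossing changes, and none of these alters the number of link components; hence the output diagram is again a virtual \emph{knot}, and it remains only to check that its parameters either display it as the unknot or make it admissible, after which the induction hypothesis finishes the job.

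This parameter check is the heart of the argument and is where I expect the main difficulty. For operation~A one has $k+j<i$ with $j\ge2$, so the output $(i-1,j,j+k-1)$ obeys $2\le j\le i-1$ and $0\le j+k-1<i-1$, hence is admissible. For operation~B one has $i<k+j<2i-1$ and $i\ne k+1$; the point to notice is that then $j\ge3$, because $j=2$ would force $k=i-1$ and so $i=k+1$, contrary to assumption, and with $j\ge3$ the output $(i-1,j-1,j+k-i-1)$ satisfies $2\le j-1\le i-1$ and $0\le j+k-i-1<i-1$, so it is admissible. For operation~C one has $k=i-1$ and output $(i,j-1,0)$: if $j\ge3$ it is admissible, and if $j=2$ it equals $(i,1,0)$, a purely virtual diagram, hence the unknot. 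In every branch we either stop at the unknot or invoke the induction hypothesis, which completes the proof. The delicate part, as flagged, is precisely this bookkeeping: tracking how $(i,j,k)$ evolves under A, B, C so that it never leaves the regime of Lemma~\ref{lemma10} except by arriving at some $(i,1,0)$, and in particular ruling out operation~B when $j=2$ (which would otherwise output an inadmissible triple with second entry $1$).
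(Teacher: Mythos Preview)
Your proof is correct and follows essentially the same strategy as the paper's: invoke Lemma~\ref{lemma10} to obtain an applicable operation, track the resulting parameters, and show the process terminates at some $(i,1,0)$. Your formal induction on $i+j$, together with the observation that operation~B forces $j\ge 3$, is a slightly cleaner packaging than the paper's argument, which instead descends until reaching $i=3$ or $j=1$ and then disposes of those terminal cases by hand.
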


\begin{proof}
First we observe the case where $i=2$. Since $(i,j,k)$ has only one link component, possible cases are
only $(2,2,1)$ and $(2,1,0)$. The virtual knot $(2,1,0)$ is the unknot.
The virtual knot $(2,2,1)$ is modified into $(2,1,0)$ by operation C. Thus it is virtually null-homotopic.

Assume that $i\geq 3$.
We check that the conditions $1\leq j\leq i$ and $0\leq k<i$ are satisfied after operations A, B and C.
If it is in case (i) in Lemma~\ref{lemma10} then we apply operation A and obtain $(i-1, j, j+k-1)$.
The second inequality is verified as $(i-1)-(j+k-1)=i-j-k>0$.
If it is in case (ii) in Lemma~\ref{lemma10} then we apply operation B and obtain $(i-1, j-1, j+k-i-1)$.
The second inequality is verified as $(i-1)-(j+k-i-1)=2i-j-k>0$.
If it is in case (iii) or (iv) in Lemma~\ref{lemma10} then we apply operation C and obtain $(i, j-1, 0)$,
which satisfies the required inequalities. 

By these operations, either the first or the second entry decreases one by one. 
Since $2\leq j\leq i$, we will reach either case (a)~$i=3$ or case (b)~$i>3$ and $j=1$.
If it is in case~(a) then either $(i,j,k)=(3,2,2)$, $(3,2,0)$ or $(3,1,0)$.
For $(3,2,2)$, we obtain $(2,1,0)$ by operation B. For $(3,2,0)$, we obtain $(2,2,1)$ by operation A
and then $(2,1,0)$ by operation C. Since the virtual knot $(i,1,0)$ is the unknot, the knot is virtually null-homotopic in either case 
in case~(a).
If it is in case~(b) and $k>0$ then we can check directly that it always has more than one link component, 
and we can exclude this case.
If it is in case~(b) and $k=0$ then it is the unknot $(i,1,0)$.
Thus we have the assertion.
\end{proof}

\subsection{Proof of the upper bound}

In the following proposition, we do not assume $j\leq i$.

\begin{prop}\label{prop2}
Suppose that $(i,j,k)$ is a virtual knot with $0\leq k<i$. Then inequality~\eqref{eq1} holds.
\end{prop}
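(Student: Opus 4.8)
The plan is to combine the reduction of Lemmas~\ref{lemma1} and~\ref{lemma2} with the explicit unknotting sequence of Lemma~\ref{prop1}, using the quantity
\[
   f(i,j,k):=\frac{(i-1)(j-1)+k}{2}
\]
as a bookkeeping device. The point is that $f$ behaves like a potential which drops by exactly the number of crossing changes performed at each step of the construction. Concretely, one checks the elementary identities $f(i-1,j,j+k-1)=f(i,j,k)$ (operation A, which uses no crossing change), $f(i,j,k)-f(i-1,j-1,j+k-i-1)=i-1$ (operation B, which uses $i-1$ crossing changes), $f(i,j,k)-f(i,j-1,0)=\frac{(i-1)+k}{2}=i-1$ when $i=k+1$ (operation C, which uses $i-1$ crossing changes), and $f(i,j,k)-f(i,j-i,k)=\frac{i(i-1)}{2}$ (the reduction of Lemma~\ref{lemma1}(2), which uses $i(i-1)/2$ crossing changes). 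These are all routine algebra.

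First I would reduce to the case $1\le j\le i$. If $j>i$, then $(i,j-i,k)$ is again a virtual knot---the two braids differ by the full twist $B_{i,i}$, which induces the trivial permutation, so they have the same number of link components---and by induction $(i,j-i,k)$ satisfies inequality~\eqref{eq1}; hence so does $(i,j,k)$ by Lemma~\ref{lemma2}. Iterating this reduction, it suffices to treat $1\le j\le i$. If $j=1$, the one-component hypothesis forces $k=0$, so $(i,1,0)$ is the unknot, $\vu((i,1,0))=0=f(i,1,0)$, and \eqref{eq1} holds trivially.

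It remains to treat $2\le j\le i$. By Lemma~\ref{prop1} there is a finite sequence of operations A, B, C carrying $(i,j,k)$ to a diagram $(i',1,0)$ of the unknot, and along this sequence only generalized Reidemeister moves and crossing changes are used; moreover Lemma~\ref{lemma10} guarantees that at each stage one of the three operations is indeed applicable. Since $f$ decreases by exactly the number of crossing changes at each operation (by the identities above) and $f(i',1,0)=0$, the total number of crossing changes used in the whole sequence equals $f(i,j,k)-f(i',1,0)=f(i,j,k)$. Therefore $(i,j,k)$ can be unknotted using $\frac{(i-1)(j-1)+k}{2}$ crossing changes, which is exactly \eqref{eq1}.

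I do not expect a genuine obstacle here: all the topological substance is already contained in Lemmas~\ref{lemma1}, \ref{lemma10} and~\ref{prop1}, and the remaining work is the verification of the four identities for $f$ and the observation that the reduction step and the operation sequence are mutually compatible with the same count (so that the two parts of the argument fit together cleanly). The one point deserving a word of care is the claim that $(i,j-i,k)$ is again a virtual knot, which is what legitimizes the inductive step of the reduction.
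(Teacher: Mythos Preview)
Your argument is correct and follows essentially the same line as the paper: reduce to $j\le i$ via Lemma~\ref{lemma2}, then run the operation sequence of Lemma~\ref{prop1} down to an unknot $(i',1,0)$ while tracking crossing changes against the quantity $f(i,j,k)=\frac{(i-1)(j-1)+k}{2}$. The only cosmetic difference is that the paper organizes the count as an induction on the length of the operation sequence, whereas you phrase it as a telescoping potential; the three algebraic identities you list are exactly the three case-checks in the paper's induction.
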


\begin{proof}
By Lemma~\ref{lemma2}, it is enough to show the assertion in the case where $j\leq i$.

As we saw in the proof of Proposition~\ref{prop1}, we can reach the unknot of the form $(i,1,0)$ after applying 
operations A, B and C. For $(i,1,0)$, the right hand side of inequality~\eqref{eq1} is $0$. Thus the assertion holds.

Now we prove the assertion by induction. Let $m(i,j,k)$ be the number of operations A, B and C to obtain the unknot.
For the induction, we assume that inequality~\eqref{eq1} holds for any virtual knot $(i,j,k)$ with $m(i,j,k)\leq \ell$,
and check the inequality for a virtual knot $(i',j',k')$ with $m(i',j',k')=\ell+1$. 

Suppose that $(i,j,k)$ is obtained from $(i',j',k')$ by operation A, i.e., in the case $(i',j',k')=(i+1,j,k-j+1)$.
We do not use crossing changes during operation A. Since 
\[
   \frac{(i'-1)(j'-1)+k'}{2}=\frac{(i+1-1)(j-1)+k-j+1}{2}=\frac{(i-1)(j-1)+k}{2},
\]
the assertion holds.

Suppose that $(i,j,k)$ is obtained from $(i',j',k')$ by operation B, i.e., in the case $(i',j',k')=(i+1,j+1,k-j+i+1)$.
We use $i'-1$ crossing changes during operation B. Since 
\[
   \frac{(i'-1)(j'-1)+k'}{2}-(i'-1)=\frac{ij+k-j+i+1}{2}-i=\frac{(i-1)(j-1)+k}{2},
\]
the assertion holds.

Finally, suppose that $(i,j,0)$ is obtained from $(i',j',k')$ by operation C, i.e., in the case $(i',j',k')=(i,j+1,i-1)$.
We use $i'-1$ crossing changes during operation B. Since 
\[
   \frac{(i'-1)(j'-1)+k'}{2}-(i'-1)=\frac{(i-1)j+i-1}{2}-(i-1)=\frac{(i-1)(j-1)}{2},
\]
the assertion holds. This completes the proof.
\end{proof}

\section{Lower bound and Proof of Theorem~\ref{thm2}}

\subsection{Gauss diagram}

We shortly introduce the Gauss diagram of a virtual knot.
Let $D$ be a virtual knot diagram. Choose a starting point $p$ on $D$. Describe the unit circle $\Gamma$ on a plane
and choose a point $q$ on $\Gamma$ which corresponds to $p$. We follow the knot strand of $D$ from $p$ and
also $\Gamma$ from $q$ simultaneously.
When we meet a classical crossing on $D$ then we put a label on $\Gamma$.
In consequence, we have $2N$ points on $\Gamma$, where $N$ is the number of classical crossings on $D$.
Each classical crossing of $D$ corresponds to two points among these $2N$ points.
We connect these two points by an edge and add an arrowhead to the point on $\Gamma$ corresponding to the understrand.
Finally, we write a sign $+$ if the crossing is positive and $-$ otherwise.
The obtained diagram is called the {\it Gauss diagram} of $D$ and a signed, arrowed edge on the Gauss diagram 
is called a {\it chord}.

A {\it flip} is an operation on a Gauss diagram which reverses the direction of the arrow of 
a chord and changes the sign assigned to the chord.
This operation corresponds to a crossing change on a virtual knot diagram obtained from the Gauss diagram.

\subsection{Invariant $P(K)$}

Let $GD(K)$ be a Gauss diagram of a virtual knot $K$.
The endpoints of a chord $c$ of $GD(K)$ decompose the circle $\Gamma$ of 
$GD(K)$ into two open arcs, $\gamma_1$ and $\gamma_2$.
We flip suitable chords in $GD(K)$ other than $c$ such that their arrowheads are on $\gamma_1$
and denote the sum of signs in the flipped Gauss diagram by $i(c)$.
We then define $P(GD(K))$ for the Gauss diagram $GD(K)$ by
\[
   P(GD(K))=\sum_{c}\sign(c)t^{|i(c)|},
\]
where the sum runs all chords $c$ such that $i(c)\ne 0$.

\begin{thm}[see Definition~3.1 in~\cite{chernov}]
The polynomial $P(GD(K))$ does not depend on a choice of Gauss diagram of the virtual knot $K$.
\end{thm}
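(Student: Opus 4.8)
The plan is to verify that $P(GD(K))$ is invariant under each of the generalized Reidemeister moves, since by definition two Gauss diagrams represent the same virtual knot if and only if they are related by a finite sequence of these moves (the virtual Reidemeister moves VR1, VR2, VR3 and the mixed move MR do not alter the Gauss diagram at all, so the content is entirely in the classical moves R1, R2, R3). For each move I would compare the two Gauss diagrams locally: a move changes the chord set in a controlled way, and I must check that the sum $\sum_{c}\sign(c)t^{|i(c)|}$ is unchanged, where $i(c)$ is computed by flipping all chords whose arrowhead can be placed on the chosen side $\gamma_1$ of $c$ so that their tails lie on $\gamma_2$, and then summing signs.

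First, for R1 a single chord $c_0$ with both endpoints adjacent on $\Gamma$ is added or removed. The key observation is that for this chord, one of the two arcs $\gamma_1,\gamma_2$ determined by its endpoints is empty, so no other chord has both its head on one side and its tail on the other; hence $i(c_0)=0$ and $c_0$ contributes nothing to $P$. Moreover, $c_0$'s presence does not affect the computation of $i(c)$ for any other chord $c$, because $c_0$'s two endpoints sit in the same arc of $\Gamma\setminus\{c\text{'s endpoints}\}$ and $c_0$ can never be one of the chords flipped in computing $i(c)$ (flipping $c_0$ changes the sum by $\sign(c_0)$ on whichever side both its endpoints lie, but a chord with both endpoints on one side is never required to be moved to make its head land on $\gamma_1$ — wait, here I must be careful: a chord with both endpoints on the same side has its head already on that side, so whether it is flipped depends on which side we designated $\gamma_1$; I would check that its contribution to $i(c)$ is forced to be the same with or without the R1 kink since removing the kink does not move $c_0$'s endpoints relative to $c$). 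So $P$ is unchanged under R1.

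Next, R2 adds or removes a pair of chords $c_1,c_2$ with opposite signs whose four endpoints are interleaved in the pattern of a bigon. The heart of the argument is that $\sign(c_1)+\sign(c_2)=0$ and that $i(c_1)=i(c_2)$: the two chords span "the same" pair of arcs (up to the two endpoints of the bigon, which do not separate any third chord's head from its tail), so exactly the same set of other chords gets flipped when computing $i(c_1)$ and when computing $i(c_2)$, giving equal values; hence their joint contribution $\sign(c_1)t^{|i(c_1)|}+\sign(c_2)t^{|i(c_2)|}=0$. I would also check that inserting $c_1,c_2$ does not change $i(c)$ for a third chord $c$: if $c$ has an endpoint inside the bigon region, then $c_1$ and $c_2$ are both flippable (or both not) in the computation of $i(c)$ and their signs cancel there as well, while if $c$ has no endpoint in that region the bigon chords are irrelevant. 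So $P$ is unchanged under R2.

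Finally, R3 permutes a triangle of three chords without changing signs or the cyclic interleaving pattern of endpoints with respect to any fourth chord; the standard Gauss-diagram analysis (as in the Goussarov–Polyak–Viro formalism) shows that for each of the three triangle chords, the set of other chords flipped in computing $i(c)$ is unaffected by the R3 move, and that a fourth chord $c$ sees the triangle endpoints only up to a reordering that does not change which of them lie on $\gamma_1$ versus $\gamma_2$. Hence every $i(c)$ is preserved and $P$ is unchanged. The main obstacle I anticipate is the R3 case: there are several sub-cases according to the orientations and arrowhead positions of the three strands in the move, and one must confirm in each that no $i(c)$ jumps — this is the combinatorially delicate part, whereas R1 and R2 are essentially immediate from the sign-cancellation observations above. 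Since $P$ is invariant under all generalized Reidemeister moves, it does not depend on the choice of Gauss diagram of $K$, which is the assertion. $\qed$
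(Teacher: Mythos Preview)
The paper does not supply its own proof of this theorem: it is stated with a citation to Byberi--Chernov (and a remark that the invariant was first introduced by Henrich), and the argument is taken as known. So there is nothing in the paper to compare your proposal against line by line; your approach---checking invariance under the generalized Reidemeister moves at the level of Gauss diagrams---is exactly the standard route and is presumably what is carried out in the cited references.

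That said, two comments on your write-up. First, your R1 paragraph drifts into confusion about chords with both endpoints on the same arc. The definition of $i(c)$ in this paper (clarified later in Section~4.3 by the formula $|i(c)|=|i_o(\gamma_1,\gamma_2)-i_u(\gamma_1,\gamma_2)|$) only involves chords \emph{linking} $c$, i.e.\ with one endpoint on each of $\gamma_1,\gamma_2$; a chord with both endpoints on the same arc simply does not enter the sum, and there is nothing to check. With that reading, your R1 and R2 arguments are correct and can be stated cleanly: the R1 chord links no other chord, and the two R2 chords are parallel, link exactly the same set of third chords, have adjacent arrowheads (hence always on the same side of any third chord), and carry opposite signs, so all contributions cancel.

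Second, you correctly flag R3 as the substantive case and then stop at ``the standard Gauss-diagram analysis \ldots\ shows''. That is where the actual work lies: one must track, for each oriented variant of R3, how the three chords' endpoints are permuted and verify that no linking count changes for any of the three chords or for any external chord. This is routine but genuinely case-heavy, and as written your proposal is a plan rather than a proof at that point. If you want a self-contained argument, that case analysis must be written out (or you can cite Henrich or Byberi--Chernov as the paper does).
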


\begin{dfn}
The $P$-invariant $P(K)$ of a virtual knot $K$ is defined 
by $P(K)=P(GD(K))$ for a Gauss diagram $GD(K)$ of $K$.
\end{dfn}

Note that this invariant was first introduced by Henrich in a slightly different form, see~\cite[Definition~3.3]{henrich}.

\begin{lemma}[Byberi-Chernov~\cite{chernov}]\label{lemma13}
Set $P(K)=\sum_{m\in\N}b_mt^m$. Suppose that $K$ is virtually null-homotopic.
Then the following inequality holds:
\[
   \vu(K)\geq\frac{1}{2}\sum_{m\in\N}|b_m|.
\]
\end{lemma}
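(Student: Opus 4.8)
The plan is to reproduce the argument of Byberi and Chernov in~\cite{chernov}: control how much the quantity $\sum_m|b_m|$ can change under a single crossing change, and then add these contributions up along a shortest unknotting sequence.

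First I would fix a diagram $D$ of $K$ together with a sequence of $\vu(K)$ crossing changes turning $D$ into a diagram $D'$ of the unknot. Passing to Gauss diagrams, a crossing change at a classical crossing $c_0$ is precisely the flip of the chord labelled $c_0$ (its arrow reverses and its sign toggles), so we obtain a chain of Gauss diagrams $GD(D)=G_0,G_1,\dots,G_N$ with $N=\vu(K)$, in which $G_\ell$ is obtained from $G_{\ell-1}$ by a single flip and $G_N$ represents the unknot. By the invariance theorem quoted above, $P(G_N)=P(\text{unknot})=0$, so $\sum_m|b_m(G_N)|=0$.

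The heart of the matter is the following claim: if $G'$ is obtained from a Gauss diagram $G$ by a single flip of a chord $c_0$, then $P(G')$ and $P(G)$ differ only in one coefficient, and that coefficient changes by $\pm 2$; in particular $\bigl|\sum_m|b_m(G')|-\sum_m|b_m(G)|\bigr|\le 2$. To prove this I would examine the effect of the flip on $i(c)$ for each chord $c$. The flip moves no endpoints, so the two arcs $\gamma_1,\gamma_2$ cut out by any chord $c$ are unchanged, as is the information of which chords interleave which. For $c\ne c_0$ the sign $\sign(c)$ is also unchanged (a crossing change at $c_0$ only toggles the sign at $c_0$), and a short case analysis shows that $i(c)$ is unchanged as well: when $c_0$ interleaves $c$, forcing the arrowhead of $c_0$ onto $\gamma_1$ and reading off its sign gives the same value before and after the flip, because reversing the arrow and reversing the sign compensate each other; the chords that do not interleave $c$ are untouched by the flip. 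Hence every $c\ne c_0$ contributes the same monomial $\sign(c)\,t^{|i(c)|}$ to $P(G)$ and to $P(G')$. For $c_0$ itself, $|i(c_0)|$ depends only on the other chords and on the arcs cut out by the (unmoved) endpoints of $c_0$, hence is unchanged, while $\sign(c_0)$ toggles; so the contribution of $c_0$ changes from $\sign(c_0)\,t^{|i(c_0)|}$ to $-\sign(c_0)\,t^{|i(c_0)|}$ (and is $0$ in both diagrams when $i(c_0)=0$). This proves the claim.

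Combining, $\sum_m|b_m(K)|=\sum_m|b_m(G_0)|\le\sum_m|b_m(G_N)|+2N=2\vu(K)$, which is the asserted inequality. I expect the only genuine work to lie in the case analysis for the claim, namely keeping the three ingredients ``a flip reverses the arrow'', ``a flip reverses the sign'', and ``the recipe forces arrowheads onto $\gamma_1$'' consistently aligned, and separating at each step the chords that interleave $c_0$ from those that do not.
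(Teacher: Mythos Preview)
Your argument is correct and is exactly the Byberi--Chernov proof; the paper itself does not reprove this lemma but simply cites~\cite{chernov}. One small imprecision: you assume the $\vu(K)$ crossing changes can all be performed on a single fixed diagram $D$, but the definition of $\vu$ allows generalized Reidemeister moves between crossing changes, so a priori the minimum need not be realized on one diagram. This does not harm the proof, since you already invoke the invariance of $P$: simply take a sequence of virtual knots $K=K_0,K_1,\dots,K_N=\text{unknot}$ with each $K_{\ell}$ obtained from $K_{\ell-1}$ by one crossing change on \emph{some} diagram, and apply your flip analysis to that particular Gauss diagram to get $\bigl|\sum_m|b_m(K_\ell)|-\sum_m|b_m(K_{\ell-1})|\bigr|\le 2$; the invariance of $P$ makes each $\sum_m|b_m(K_\ell)|$ well defined independent of the diagram chosen.
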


\subsection{Proofs of the lower bound and Theorem~\ref{thm2}}

In this section, we complete the proof of Theorem~\ref{thm2} by showing
the lower bound of the equality in the theorem.

The definition of $i(c)$ is interpreted in terms of a virtual knot diagram $D$ as follows:
A chord $c$ corresponds to a classical crossing $p$ of $D$. Start from $p$ and follow the strand into one of 
the possible directions, then we come back to $p$ again at some moment. Denote this loop by $\gamma_1$
and the rest part of the virtual knot by $\gamma_2$.
For each classical crossing consisting of $\gamma_1$ and $\gamma_2$, if $\gamma_1$ is an understrand then
we apply a crossing change so that $\gamma_1$ is an overstrand.
We denote the obtained virtual knot diagram by $D_c$.
Then, $i(c)$ is the sum of signs of all classical crossings consisting of $\gamma_1$ and $\gamma_2$.
Let $i_o(\gamma_1,\gamma_2)$ and $i_u(\gamma_1,\gamma_2)$ be the number of classical crossings on $D$
consisting of $\gamma_1$ and $\gamma_2$ such that $\gamma_1$ is an overstand and an understrand, respectively.
Then we have
\[
|i(c)|=|i_o(\gamma_1,\gamma_2)-i_u(\gamma_1,\gamma_2)|.
\]

A closed positive braid is a link, or a diagram, obtained as the closure of a braid consisting of only positive words.
Remark that all classical crossings on $(i,j,k)$ are positive and they are given by positive words in
its braid word presentation.

\begin{lemma}\label{lemma11}
Let $B$ be the diagram of a closed positive braid and suppose that $B$ is a knot.
Let $GD(B)$ be the Gauss diagram of $B$. Then $i(c)=0$ for any chord $c$ in $GD(B)$.
\end{lemma}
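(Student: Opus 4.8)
The plan is to show that for a closed positive braid that happens to be a knot, the two integers $i_o(\gamma_1,\gamma_2)$ and $i_u(\gamma_1,\gamma_2)$ appearing in the formula $|i(c)|=|i_o(\gamma_1,\gamma_2)-i_u(\gamma_1,\gamma_2)|$ are actually equal, so that $i(c)=0$. Since $B$ is a positive braid, every classical crossing is positive, so $i(c)$ is literally the difference between the number of crossings where $\gamma_1$ is the overstrand and the number where $\gamma_1$ is the understrand, and the task reduces to a purely combinatorial balancing argument on the braid.

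First I would fix a chord $c$ corresponding to a crossing $p$, so $p$ lies on the strand $\sigma_m$ between two adjacent braid levels, say strands $m$ and $m+1$. Starting at $p$ and following the knot in one direction until returning to $p$ cuts the knot into the loop $\gamma_1$ and the complementary arc $\gamma_2$. The key geometric observation is that $\gamma_1$, viewed inside the braid, is a union of strand-segments, and as we travel along $\gamma_1$ we pass through the braid a certain number of times going ``down'' (in the direction of the braid) and the same number of times going ``up'' — actually, since the braid closure glues top to bottom, $\gamma_1$ traverses the braid box some number of times; at each classical crossing it meets, $\gamma_1$ is either the overstrand or the understrand, and a crossing between $\gamma_1$ and $\gamma_2$ is exactly a crossing where one of the two strand-segments belongs to $\gamma_1$ and the other to $\gamma_2$.

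The main step is a counting identity. I would think of each crossing as a transposition acting between two horizontally adjacent positions, and track how the ``$\gamma_1$-colored'' positions evolve from the bottom of the braid to the top. Because the braid is a knot, there is a single cyclic structure; color the portion of the closed braid lying in $\gamma_1$. Reading the braid from bottom to top, at each generator $\sigma_\ell$ the set of colored positions can change only at positions $\ell,\ell+1$, and a crossing between $\gamma_1$ and $\gamma_2$ occurs precisely when exactly one of positions $\ell,\ell+1$ is colored just below that crossing. At such a crossing, the colored position is the overstrand iff it is the left strand entering $\sigma_\ell$ (or right, depending on the sign convention in Figure~\ref{figM1}), i.e. whether $\gamma_1$ occupies position $\ell$ or $\ell+1$ at that moment. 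So the signed count $i_o-i_u$ equals a signed sum over all mixed crossings of $\pm1$ according to which side $\gamma_1$ is on. This signed sum telescopes: it measures the net horizontal displacement of the boundary between the colored block(s) and the uncolored block(s) as we pass through the braid. Since the braid closes up (top positions are identified with bottom positions) and $\gamma_1$ returns to its starting configuration, the total net displacement around the closed braid is zero. Hence $i_o-i_u=0$, giving $i(c)=0$.

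The hard part will be making the ``net horizontal displacement'' bookkeeping precise — in particular, correctly handling the case where the colored set $\gamma_1$ consists of several disjoint blocks of positions (so there are several boundaries, each contributing), and verifying that the identification of top and bottom of the braid forces the alternating sum to vanish rather than merely to be bounded. A clean way to package this is to assign to each mixed crossing a sign depending on whether $\gamma_1$ is entering from the left or the right, observe that along $\gamma_1$ these signs must alternate (each time $\gamma_1$ ``crosses over'' $\gamma_2$ it switches which side it is on relative to the strand it just traded places with), and then conclude the contributions cancel in pairs. One must only check there is no leftover unpaired crossing, which is exactly where the hypothesis that $B$ is a single knot component (so $\gamma_1$ is a genuine loop closing up on itself) is used.
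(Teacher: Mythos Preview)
Your main idea—tracking the positions occupied by $\gamma_1$ and observing that the signed count $i_o - i_u$ is the net change of a quantity that must vanish around a closed loop—is correct and is essentially what the paper does, but the paper's execution is considerably simpler. Rather than coloring the whole set of positions at each height and chasing ``boundaries between blocks,'' the paper just follows $\gamma_1$ itself as a path: at a positive crossing the overstrand always points in the right-bottom direction and the understrand in the right-top direction, so as you traverse $\gamma_1$, every crossing at which it is the overstrand raises its braid position by one and every crossing at which it is the understrand lowers it by one. Since $\gamma_1$ leaves $p$ in the right-bottom direction and (because $B$ is a knot) returns to $p$ from the left-bottom direction, the start and end positions coincide, so the total change is zero; self-crossings of $\gamma_1$ contribute once as $+1$ and once as $-1$ and hence cancel automatically. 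This disposes of all the bookkeeping about ``several disjoint blocks'' that you flagged as the hard part—the potential function $\sum_{k\in S}k$ implicit in your coloring argument is exactly the braid position of $\gamma_1$, summed over its passes, and following the path directly makes the telescoping transparent.

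Your proposed ``clean'' alternative at the end—that the over/under signs must alternate along $\gamma_1$ and therefore cancel in pairs—does not work as stated. The local observation that $\gamma_1$ switches sides relative to the particular strand of $\gamma_2$ it just crossed is true but does not force a global alternation: $\gamma_1$ can be the overstrand at several consecutive mixed crossings (for instance while descending through $\sigma_\ell,\sigma_{\ell+1},\ldots$ past distinct strands of $\gamma_2$), so there is no pairing argument along those lines. Drop that paragraph and keep the telescoping argument.
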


\begin{proof}
Choose a crossing point $p$ of $B$. We choose $\gamma_1$ by following the strand from $p$ in the right-bottom direction.
There are two important remarks: 
One is that, since all crossings on $B$ are positive, $\gamma_1$ is always an overstrand when $\gamma$ directs to the right-bottom,
and it is always an understrand when $\gamma$ directs to the right-top.
The other remark is that, since $B$ is a knot, $\gamma_1$ comes back to $p$ from the left-bottom.
If $\gamma_1$ has no self-crossing, then $i_o(\gamma_1,\gamma_2)=i_u(\gamma_1,\gamma_2)$.
Furthermore, this equality holds even if $\gamma_1$ has self-crossings since each self-crossing 
consists of a strand going to the right-top and a strand going to the right-bottom. Thus we have $i(c)=0$.
\end{proof}

\begin{lemma}\label{lemma12}
Suppose that $(i,j,k)$ is a virtual knot diagram and let $GD((i,j,k))$ be its Gauss diagram.
Then $i(c)\ne 0$ for any chord $c$ in $GD((i,j,k))$.
\end{lemma}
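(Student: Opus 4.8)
The plan is to analyze the structure of $(i,j,k)$ directly via its braid word and locate, for each chord $c$, a clean asymmetry that forces $i(c)\neq 0$. Recall that the braid part of $(i,j,k)$ is the product $(VB_{i,j}^1)B_k$, where $VB_{i,j}^1$ has its leftmost overstrand virtualized and $B_k=\sigma_k\sigma_{k-1}\cdots\sigma_1$. Every classical crossing of $(i,j,k)$ is positive and each is an instance of some $\sigma_\ell$. First I would recall (from the discussion immediately preceding the lemma) the diagrammatic interpretation: for a chord $c$ at a classical crossing $p$, one follows the strand out of $p$ in one direction back to $p$, getting a loop $\gamma_1$ and complement $\gamma_2$, and $|i(c)|=|i_o(\gamma_1,\gamma_2)-i_u(\gamma_1,\gamma_2)|$. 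So the whole game is to show this over/under count never balances for our diagram.

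The key observation I would exploit is the contrast with Lemma~\ref{lemma11}: for a genuine closed positive braid the balance $i_o=i_u$ comes from the fact that, travelling along $\gamma_1$, every descent to the right-bottom (where $\gamma_1$ is an overstrand) is matched by an ascent to the right-top (where it is an understrand), since $\gamma_1$ is a closed loop in the braid. In $(i,j,k)$ the virtualization of the leftmost overstrand of $VB_{i,j}^1$ breaks exactly this bookkeeping: the strand occupying the leftmost position passes over nothing classically there, so the usual pairing of classical over-passes with classical under-passes along a loop is disrupted by precisely the crossings that were turned virtual. Concretely, I would trace through the cyclic structure of the permutation braid: starting at any classical crossing $p$ and following $\gamma_1$, the loop must at some point traverse the region where the leftmost strand lives, and in doing so it accumulates an unmatched surplus of classical over-passes (or under-passes), because the ``return'' crossings that would have cancelled them are virtual. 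Summing signs, which are all $+1$, this surplus is a nonzero integer, hence $i(c)\neq 0$.

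To make this rigorous I would set up an explicit enumeration: label the $2N$ chord-endpoints along $\Gamma$ by reading the braid word of $(VB_{i,j}^1)B_k$ cyclically, and for a chord $c$ coming from a crossing $\sigma_\ell$ at a specific word-position, describe $\gamma_1$ as the arc of $\Gamma$ between $c$'s two endpoints. Then $i_o(\gamma_1,\gamma_2)$ counts crossings whose over-strand endpoint lies on $\gamma_1$ while the under-strand endpoint lies on $\gamma_2$, and $i_u$ the reverse; I would compute the difference by tracking, strand by strand through the braid, which geometric strand is on $\gamma_1$ at each height and comparing it to the permutation induced by $B_k$ and by the (classical part of the) torus braid. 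Since $j\le i$ after Lemma~\ref{lemma1}–type reductions need not be assumed here, I should handle the general $i,j,k$ case, but the torus-braid permutation is a single $i$-cycle composed with the $B_k$ permutation, so the combinatorics is completely explicit. The main obstacle I anticipate is purely bookkeeping: carefully identifying, for an arbitrary chord, the sign of the surplus and checking it never vanishes even when $\gamma_1$ contains self-crossings (those, as in Lemma~\ref{lemma11}, cancel in pairs and can be ignored), so the nonzero contribution comes entirely from the mixed crossings near the virtualized strand. Once the surplus is shown to equal a nonzero count of positive crossings, $i(c)\neq0$ follows immediately, and combined with Lemma~\ref{lemma11}, Lemma~\ref{lemma13}, and the definition of $P$, this will pin down $\sum|b_m|$ and hence the lower bound matching Proposition~\ref{prop2}.
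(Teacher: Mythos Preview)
Your core insight matches the paper's: compare $(i,j,k)$ to the closed positive braid obtained by reverting the virtual crossings on $a_1$ to classical ones, use Lemma~\ref{lemma11} to get $i(c)=0$ there, and observe that the virtualization destroys the over/under balance. But your planned execution via explicit chord-endpoint labelling and strand-by-strand braid-word bookkeeping is far heavier than necessary. The paper's argument is a two-line comparison: choose $\gamma_1$ (swapping with $\gamma_2$ if needed) so that $a_1\subset\gamma_2$; then the only crossings whose contribution to $i(c)$ differs between $(i,j,k)$ and its de-virtualized closed positive braid are precisely the virtual crossings at which $\gamma_1$ meets $a_1$, and these all contribute with the same sign. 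Hence $|i(c)|$ equals that number $N$ of meetings. Since the two preimages of $p$ sit at the same braid height on adjacent strands, $\gamma_1$ must traverse the closure at least once and therefore pass through the first block of $VB_{i,j}^1$; as $\gamma_1$ does not contain $a_1$, it does so as a non-$a_1$ strand and crosses $a_1$ there, giving $N\ge1$. No enumeration or permutation tracking is needed---the single normalization ``$\gamma_1$ avoids $a_1$'' collapses all the bookkeeping you anticipated.
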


\begin{proof}
Assume that there exists a chord $c$ such that $i(c)=0$. Choose $\gamma_1$ such that it does not contain the vertical strand 
$a_1$ which passes though all virtual crossings. This is possible by exchanging the roles of $\gamma_1$ and $\gamma_2$ 
if necessary.
Suppose that $\gamma_1$ meets $a_1$ at virtual crossings $N$ times, where $N$ is at least $1$.
If we regard the virtual crossings as classical crossings, then the diagram is a closed positive braid
and we have $i(c)=0$ by Lemma~\ref{lemma11}. 
This means that $i(c)$ for $(i,j,k)$ is calculated from that for the closed positive braid
by subtracting the signed number of virtual crossings on $\gamma_1$. Thus we have $|i(c)|=N\geq 1$.
\end{proof}

\begin{proof}[Proof of Theorem~\ref{thm2}]
As mentioned in Remark~\ref{rem99}, $(i,j,k)$ is virtually null-homotopic.
This also follows from Lemma~\ref{prop1}.
The upper bound of the equality in Theorem~\ref{thm2} follows from Proposition~\ref{prop2}.
For the lower bound, we use the inequality in Lemma~\ref{lemma13}.
The diagram $(i,j,k)$ has $(i-1)(j-1)+k$ classical crossings and the signs of all classical crossings are positive.
Since $i(c)\ne 0$ for any chord $c$ by Lemma~\ref{lemma12}, we have $\sum_{m\in\N}|b_m|=(i-1)(j-1)+k$.
Thus the lower bound follows from the inequality in Lemma~\ref{lemma13}.
\end{proof}

\begin{rem}
The assertion in Theorem~\ref{thm1} holds in case $pq<0$ also
since the virtual knot diagram $V_{p,q}^1$ with $pq<0$ is the mirror image of $V_{|p|,|q|}^1$.
\end{rem}

\section{Further discussions}

\subsection{The virtual torus knot $VT_{p,q}^n$}\label{sec51}

\begin{lemma}\label{lemma14}
Suppose that $p$ and $q$ are coprime and $n$ is an integer such that $1\leq n\leq q$.
If $p$ and $n$ are coprime then $\vu(VT_{p,q}^n)\geq (p-1)(q-n)/2$.
\end{lemma}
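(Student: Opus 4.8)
The plan is to imitate the proof of the lower bound in Theorem~\ref{thm2}, using the $P$-invariant together with Lemma~\ref{lemma13}, but now I must control the quantity $\sum_{m\in\N}|b_m|$ for the diagram $VT_{p,q}^n$ rather than for $(i,j,k)$. Recall that $VT_{p,q}^n$ is the closure of the virtual braid $VB_{p,q}^n$ in which the classical crossings on $n$ of the $q$ overstrands have been virtualized. Each of the remaining $q-n$ classical overstrands contributes $p-1$ classical crossings, all positive, so the diagram has exactly $(p-1)(q-n)$ classical crossings, and $P(VT_{p,q}^n)=\sum_c \sign(c)\,t^{|i(c)|}=\sum_c t^{|i(c)|}$. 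Thus the whole problem reduces to showing that $i(c)\neq 0$ for every chord $c$ of a Gauss diagram $GD(VT_{p,q}^n)$: granting this, $\sum_{m\in\N}|b_m|=(p-1)(q-n)$, and Lemma~\ref{lemma13} gives $\vu(VT_{p,q}^n)\ge (p-1)(q-n)/2$.

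First I would set up the computation of $i(c)$ using the diagrammatic interpretation given just before Lemma~\ref{lemma11}: a chord $c$ corresponds to a classical crossing $p$, the two strands through $p$ split the knot into loops $\gamma_1$ and $\gamma_2$, and $i(c)=i_o(\gamma_1,\gamma_2)-i_u(\gamma_1,\gamma_2)$, the signed count of \emph{classical} crossings between the two loops (signs being irrelevant here since everything is positive, so this is literally $i_o-i_u$). The key structural fact I would isolate is the analogue of Lemma~\ref{lemma11}: if we reinstate \emph{all} the virtualized crossings as classical positive crossings, $VT_{p,q}^n$ becomes the standard closed positive braid diagram $D(T_{p,q})$, which is a knot (as $p,q$ coprime), so by Lemma~\ref{lemma11} the corresponding $i$-value of the chord $c$ vanishes. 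Hence, exactly as in the proof of Lemma~\ref{lemma12}, the actual value $i(c)$ for $VT_{p,q}^n$ equals $-N_c$ where $N_c$ is the (signed, but here all same-sign) number of virtual crossings lying on the chosen loop $\gamma_1$; choosing $\gamma_1$ to be the loop not containing a designated virtualized overstrand is the natural normalization. So $|i(c)|=N_c$, and it remains to show $N_c\ge 1$ for a suitable choice of $\gamma_1$ — equivalently, that no loop $\gamma_1$ arising from splitting at a classical crossing can be disjoint (as an arc of the knot) from all virtual crossings while its complement $\gamma_2$ is also forced to meet some of them.

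The hard part will be precisely this combinatorial claim: that for \emph{every} classical crossing $p$, at least one of the two loops $\gamma_1,\gamma_2$ contains a virtual crossing. Here is where the coprimality hypothesis on $p$ and $n$ should enter. The $n$ virtualized overstrands, read along the knot, link the $n$ "virtual columns" cyclically, and the strand running vertically through the virtual crossings visits them in an order governed by the residues of multiples of $p$ modulo something related to $n$; when $\gcd(p,n)=1$ this vertical strand is a single cycle threading all $n$ virtual columns, so one cannot excise a sub-loop through a classical crossing that avoids every virtual crossing on \emph{both} sides. I would make this precise by tracking, for the loop $\gamma_1$ starting at a classical crossing $p$ and following the positive-braid convention (right-bottom direction) as in Lemma~\ref{lemma11}, exactly when $\gamma_1$ can first return to $p$ without crossing any virtual column: the return is obstructed because the closure-arcs joining the braid's top and bottom, together with the $\gcd(p,n)=1$ condition, force $\gamma_1$ to pass through a virtual crossing before it can close up. If $\gamma_1$ happens to avoid all virtual crossings, then $\gamma_2$ contains all $n$ virtualized overstrands, hence all virtual crossings, and we swap the roles — but then we must check $\gamma_2$ (now playing $\gamma_1$) still has $N_{\gamma_2}\ge 1$, which is clear since $n\ge 1$. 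The genuinely delicate sub-case is when $\gamma_1$ meets \emph{some but not all} virtual columns; there the coprimality of $p$ and $n$ prevents the signed count from canceling, and I expect the cleanest argument is to show directly that the number of virtual crossings on $\gamma_1$ is congruent to a nonzero residue, or simply strictly between $0$ and the total, so $N_{\gamma_1}\neq 0$. Assembling these cases yields $i(c)\neq 0$ for all chords $c$, and the inequality $\vu(VT_{p,q}^n)\ge (p-1)(q-n)/2$ follows from Lemma~\ref{lemma13} as above.
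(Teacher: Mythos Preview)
Your overall strategy matches the paper's: reduce to showing $i(c)\ne 0$ for every chord by comparing $VT_{p,q}^n$ to the closed positive braid $D(T_{p,q})$ via Lemma~\ref{lemma11}, then invoke Lemma~\ref{lemma13}. The gap is in the bookkeeping of how virtualization changes $i(c)$.

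Your claim that ``$i(c)=-N_c$ where $N_c$ is the (signed, but here all same-sign) number of virtual crossings on $\gamma_1$'' is only correct when $n=1$. For $n\ge 2$, even after you normalize so that $a_1\subset\gamma_2$, the loop $\gamma_1$ may contain some of the other virtualized overstrands $a_2,\dots,a_n$. At virtual crossings on such an $a_i\subset\gamma_1$, the strand $\gamma_1$ was the \emph{overstrand} in $D(T_{p,q})$, so deleting those crossings lowers $i_o$; at virtual crossings where $\gamma_1$ passes horizontally under some $a_j\subset\gamma_2$, deleting lowers $i_u$. These contributions have opposite signs, so $|i(c)|$ is \emph{not} simply the number of virtual crossings on $\gamma_1$, and your subsequent reduction to ``$N_c\ge 1$'' does not suffice. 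Your final paragraph acknowledges a ``delicate sub-case'' with possible cancellation but only gestures at a congruence argument without carrying it out.

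The paper's fix is to track two counts separately: $N_1$ = number of times $\gamma_1$ traverses the virtual block $V_{p,n}$ as one of the vertical strands $a_i$ (with $i\ge 2$), and $N_2$ = number of horizontal traversals. After first reducing to $n<p$ (so that $0\le N_1\le n-1$ and $1\le N_2\le p-n$), a direct count gives
\[
|i(c)|=\bigl|(p-1)N_1-\bigl((n-1)N_1+nN_2\bigr)\bigr|=|pN_1-n(N_1+N_2)|.
\]
If this vanishes then $pN_1\equiv 0\pmod n$; coprimality of $p$ and $n$ together with $0\le N_1\le n-1$ forces $N_1=0$, hence $N_2=0$, contradicting $N_2\ge 1$. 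This is precisely where $\gcd(p,n)=1$ enters, and it is the missing computation in your proposal. You also omit the preliminary reduction to $n<p$, without which the bounds on $N_1,N_2$ (and hence the divisibility argument) are not available.
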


\begin{proof}
Let $V_{p,n}$ be the braid part of $VT_{p,q}^n$ having only virtual crossings.
If $n\geq p$ then we can apply generalized Reidemeister moves to $V_{p,n}$ such that it becomes $V_{p,n-p}$.
Therefore, we can assume that $n<p$.

Assume that there exists a chord $c$ such that $i(c)=0$. Choose $\gamma_1$ such that it does not contain the vertical strand 
$a_1$. This is possible by exchanging the roles of $\gamma_1$ and $\gamma_2$ if necessary.
Observing how $\gamma_1$ passes though $V_{p,n}$, we have two cases:
\begin{itemize}
\item[(1)] When $\gamma_1$ passes a vertical strand $a_i$, for some $i\in\{1,\ldots,n\}$, the contribution 
on the virtual crossings on $a_i$ to $i_o(\gamma_1,\gamma_2)$ by virtualizations 
from the closed positive braid $D_{p,q}$ is $p-1$.
The strand containing $a_i$ in the braid part $V_{p,n}$ also has the contribution $n-1$ to $i_u(\gamma_1,\gamma_2)$
since it has one crossing with each of the other vertical strands $a_j$, $j\ne i$, in $V_{p,n}$.
\item[(2)] When $\gamma_1$ passes though $V_{p,n}$ horizontally, the contribution to $i_u(\gamma_1,\gamma_2)$ is $n$
and there is no contribution to $i_o(\gamma_1,\gamma_2)$.
\end{itemize}
Let $N_1$ and $N_2$ be the number of case (1) and (2) appearing along $\gamma_1$, respectively.
Since $\gamma_1$ does not contain $a_1$, $0\leq N_1\leq n-1$. We have $1\leq N_2\leq p-n$.
As in the proof of Lemma~\ref{lemma12}, we check the difference of $i(c)$ for $VT_{p,q}^n$ and $D(T_{p,q})$,
where $i(c)=0$ for $D(T_{p,q})$ by Lemma~\ref{lemma11}.
From the above observation, we have 
\[
\begin{split}
   0&=|i(c)|=|i_o(\gamma_1,\gamma_2)-i_u(\gamma_1,\gamma_2)| \\
   &=|(p-1)N_1-((n-1)N_1+nN_2)|=|pN_1-n(N_1+N_2)|.
\end{split}
\]
Thus we have $pN_1=n(N_1+N_2)$. We then divide the both sides by $n$ and get $pN_1\equiv 0\mod n$.
Since $p$ and $n$ are coprime and $0\leq N_1\leq n-1$, we have $N_1=N_2=0$. This is a contradiction. 

Since $i(c)\ne 0$ for any chord $c$, the assertion holds by Lemma~\ref{lemma13} as in the proof of Theorem~\ref{thm2}.
\end{proof}

We checked the lower bounds of $\vu(VT_{p,q}^2)$ obtained by the $P$-invariant and
also tried to find good upper bounds by moves of virtual knot diagrams for virtual knots $VT_{p,q}^2$ with $q<p\leq 8$.
The result is shown in Table~\ref{table1}.

\begin{table}[htbp]
\begin{center}
  \begin{tabular}{|c|c|c|c|}
\hline
$(p,q)$ & $\sum|b_j|/2$ & upper bound & $\vu(VT_{p,q}^2)$ \\
\hline\hline
$(3,2)$ & $0$ & $0$ & $\vu(VT_{3,2}^2)=0$ \\
\hline
$(4,3)$ & $1$ & $1$ & $\vu(VT_{4,3}^2)=1$ \\
\hline
$(5,2)$ & $0$ & $0$ & $\vu(VT_{5,2}^2)=0$ \\
\hline
$(5,3)$ & $2$ & $2$ & $\vu(VT_{5,3}^2)=2$ \\
\hline
$(5,4)$ & $4$ & $4$ & $\vu(VT_{5,4}^2)=4$ \\
\hline
$(6,5)$ & $7$ & $7$ & $\vu(VT_{6,5}^2)=7$ \\
\hline
$(7,2)$ & $0$ & $0$ & $\vu(VT_{7,2}^2)=0$ \\
\hline
$(7,3)$ & $3$ & $3$ & $\vu(VT_{7,3}^2)=3$ \\
\hline
$(7,4)$ & $6$ & $6$ & $\vu(VT_{7,4}^2)=6$ \\
\hline
$(7,5)$ & $9$ & $9$ & $\vu(VT_{7,5}^2)=9$ \\
\hline
$(7,6)$ & $12$ & $12$ & $\vu(VT_{7,6}^2)=12$ \\
\hline
$(8,3)$ & $3$ & $3$ & $\vu(VT_{8,3}^2)=3$ \\
\hline
$(8,5)$ & $9$ & $10$ & $9 \leq \vu(VT_{8,5}^2)\leq 10$ \\
\hline
$(8,7)$ & $17$ & $17$ & $\vu(VT_{8,7}^2)=17$ \\
\hline
  \end{tabular}
 \caption{Virtual unknotting numbers of $VT_{p,q}^2$}\label{table1}
  \end{center}
\end{table}

If $p$ is odd then the lower bound is given in Lemma~\ref{lemma14}.
If $p$ is even then we calculate the $P$-invariant for each case.
The upper bound is obtained by finding moves of virtual knot diagrams,
which are written in~\cite{yanagi}.
The virtual unknotting numbers of $V_{p,q}^3$ with $q<p\leq 8$ are also discussed in~\cite{yanagi}.

\subsection{A virtual torus knot which is not virtually null-homotopic}\label{sec52}

As mentioned in Remark~\ref{rem99}, the virtual torus knot $VT_{p,q}^n$ is virtually null-homotopic.
On the other hand, we can check that the virtual torus knot in Figure~\ref{fig22} is not virtually homotopic to the unknot by the following reason:
For a Gauss diagram $GD(K)$ of a virtual knot $K$, let $\overline{GD}(K)$ be the Gauss diagram obtained from $GD(K)$ 
by flipping some of chords such that all signs are $+1$. 
If $K$ is a virtual knot obtained from a closed positive braid by virtualizations
then $GD(K)=\overline{GD}(K)$ for the Gauss diagram obtained from that diagram of $K$.
In particular, $VT_{p,q}^n$ has this property.
For each chord $c$, we rotate the Gauss diagram $\bar D$ such that $c$ directs to the top, and 
let $n_+(c)$ be the number of chords which cross the chord $c$ from the left to the right 
and $n_-(c)$ be the number of chords which cross $c$ from the right to the left. Set $n(c)=n_+(c)-n_-(c)$.
It is known in~\cite{chernov}, originally from~\cite{turaev}, that the polynomial
\[
   u(K)=\sum_c\sign(n(c))t^{|n(c)|}
\]
does not depend on the choice of a Gauss diagram of $K$ and also does not change under crossing changes,
where the sum runs all chords $c$ on $\bar D$ with $n(c)\ne 0$. This polynomial is called the {\it $u$-invariant} of $K$.
In, particular, if a virtual knot $K$ is virtually null-homotopic then $u(K)=0$.
We can check that the $u$-invariant of the virtual torus knot in Figure~\ref{fig22} is $t^2-2t$ and hence it is not virtually null-homotopic.

\begin{figure}[htbp]
\begin{center}
  \includegraphics[width=5.5cm]{./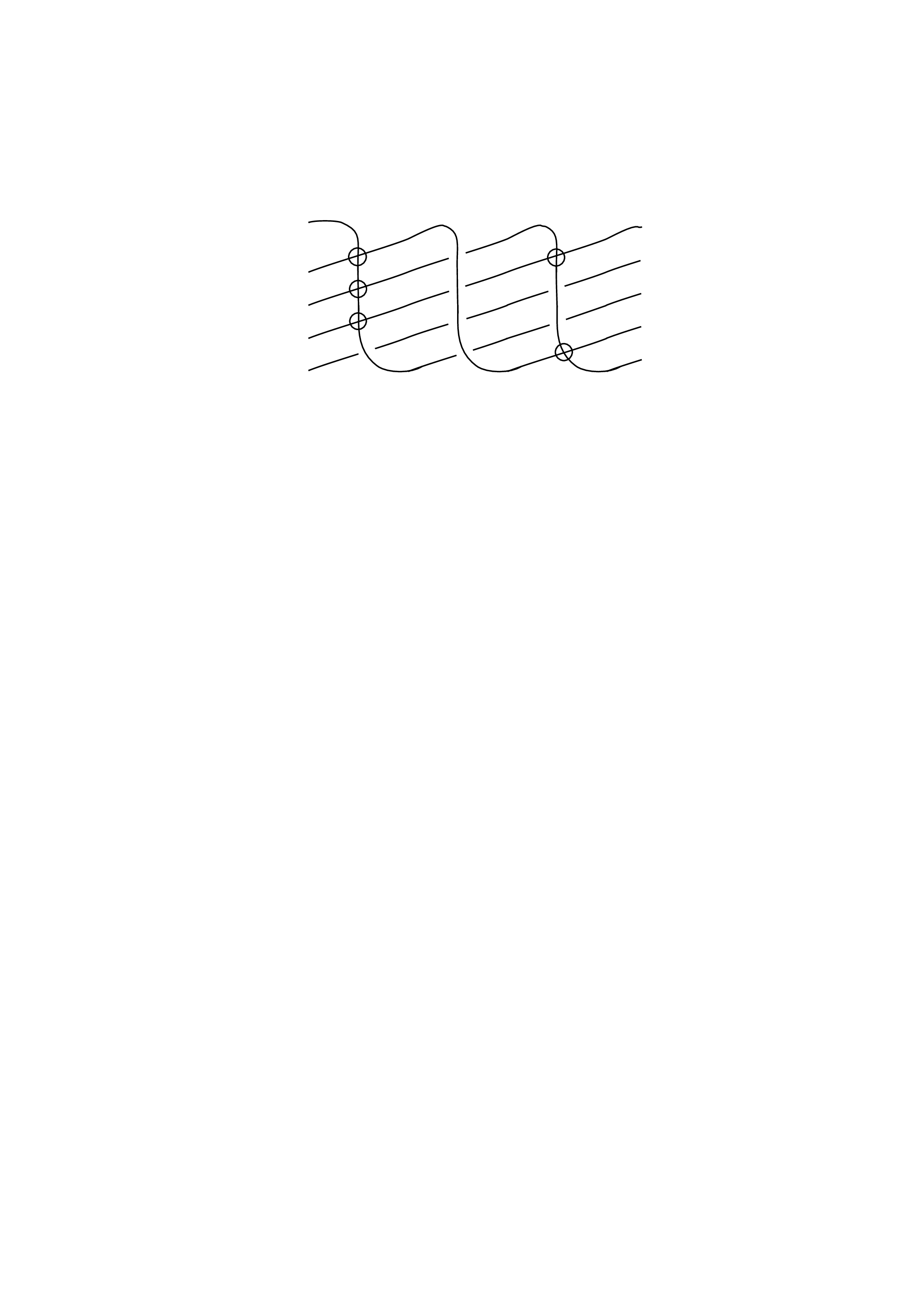}
  \caption{The closure of this braid is a virtual torus knot which is not virtually homotopic to the unknot.\label{fig22}}
\end{center}
\end{figure}

\subsection{On virtual Gordian complex}

For the set of virtually null-homotopic virtual knots, we can define a virtual version of the Gordian complex
as follows: set a vertex for each virtually null-homotopic virtual knot
and connect two vertices by an edge if one can be obtained from the other by one crossing change.
We then attach a $2$-simplex along a triangle consisting of three edges, and continue this process 
$m$-simplices with $m\geq 3$ inductively.
By construction, if there is an $m$-simplex then 
the $m+1$ virtual knots corresponding to the vertices of the $m$-simplex are related by a single crossing change.
The Gordian complex of classical knots is embedded in this complex canonically.
However, we do not know if the embedding preserves the distance of two vertices
(i.e., the minimal number of edges between two vertices). For example, we may ask the following:
Is the distance from the vertex corresponding to the $(p,q)$-torus knot to the vertex of the unknot is
$(p-1)(q-1)/2$ in the virtual Gordian complex?
If this is true then our main theorem means that 
the virtualization from $D(T_{p,q})$ to $VT_{p,q}^1$ does not change the distance
from the vertices of these knots to the vertex of the unknot in the virtual Gordian complex.

For another definition of a Gordian complex for virtual knots, see~\cite{ohyama1, ohyama2}. 
They used forbidden moves to define the Gordian complex. In particular, it is defined for all virtual knots.


\begin{thebibliography}{99}
\bibitem{chernov} E.~Byberi, V.~Chernov, 
{\it Virtual bridge number one knots}, 
Commun. Contemp. Math. {\bf 10} (2008), 1013--1021.

\bibitem{gpv} M.~Goussarov, M.~Polyak and O.~Viro, 
{\it Finite-type invariants of classical and virtual knots}, 
Topology {\bf 39} (2000), no. 5, 1045--1068.

\bibitem{henrich}A.~Henrich,
{\it A sequence of degree one Vassiliev invariants for virtual knots},
J. Knot Theory Ramifications {\bf 19} (2010), no. 4, 461--487.

\bibitem{ohyama1}S.~Horiuchi, Y.~Ohyama,
{\it The Gordian complex of virtual knots by forbidden moves},
J. Knot Theory Ramifications {\bf 22} (2013), no. 9, 1350051, 11 pp.

\bibitem{ohyama2}S.~Horiuchi, K.~Komura, Y.~Kasumi, M.~Shimozawa,
{\it The Gordian complex of virtual knots},
J. Knot Theory Ramifications {\bf 21} (2012), no. 14, 1250122, 11 pp. 

\bibitem{kanenobu}T.~Kanenobu,
{\it Forbidden moves unknot a virtual knot},
J. Knot Theory Ramifications {\bf 10} (2001), no. 1, 89--96.

\bibitem{kauffman}L.H.~Kauffman, 
{\it Virtual knot theory}, 
European J. Combin. {\bf 20} (1999), no. 7, 663--690.

\bibitem{kauffman2}L.H.~Kauffman,
{\it Introduction to virtual knot theory}, 
J. Knot Theory Ramifications {\bf 21} (2012), no. 13, 1240007, 37 pp.

\bibitem{kkkp}K. Kaur, S. Kamada, A. Kawauchi, M. Prabhakar,
{\it Generalized unknotting numbers of virtual knots},
preprint.

\bibitem{km} P.B.~Kronheimer, T.S.~Mrowka,
{\it Gauge theory for embedded surfaces, I}, Topology {\bf 32} (1993), 773--826.

\bibitem{masuda} Y.~Masuda,
Knot diagrams and Dehn presentations of knot groups, (in Japanese),
Master Thesis, Kobe University, 2014.

\bibitem{milnor} J.W.~Milnor,
Singular points of complex hypersurfaces,
Ann. Math. Studies 61,  Princeton Univ. Press, Princeton, N.J., 1968.

\bibitem{nelson}S. Nelson, 
{\it Unknotting virtual knots with Gauss diagram forbidden moves}, 
J. Knot Theory Ramifications {\bf 10} (2001), no. 6, 931--935.

\bibitem{turaev}V. Turaev,
{\it Virtual strings},
Ann. Inst. Fourier (Grenoble) {\bf 54} (2004), 2455-2525.

\bibitem{yanagi}H. Yanagi,
The virtual unknotting numbers of a class of virtual torus knots, (in Japanese), Master Thesis,
Tohoku University, 2017. (in preparation)

\bibitem{yoshiike}S. Yoshiike,
Estimates on forbidden numbers and forbidden detour numbers of virtual knots, (in Japanese), Master Thesis, Nihon University, 2017. (in preparation)

\end{thebibliography}
\end{document}